\theoremstyle{plain}
\newtheorem{theorem}{Theorem}[section]
\newtheorem{lemma}[theorem]{Lemma}
\newtheorem{corollary}[theorem]{Corollary}
\theoremstyle{definition}
\newtheorem{definition}[theorem]{Definition}
\newtheorem{example}[theorem]{Example}
\newtheorem{remark}[theorem]{Remark}
\newtheorem{algorithm}[theorem]{Algorithm}
\newcommand{\dashedrightarrow}[1][2pt]{%
  \settowidth{\@tempdima}{$\longrightarrow$}\longrightarrow
  \makebox[-\@tempdima]{\hskip-1.5ex\color{white}\rule[0.5ex]{#1}{1pt}}
  \phantom{\longrightarrow}
}
\newcounter{mnotecounter}
\newcommand{\cP}{\mathcal{P}}
\newcommand{\tcP}{\widetilde{\mathcal{P}}}
\newcommand{\Image}{\mathrm{Image}}
\begin{document}

\title{Covering Rational Ruled Surfaces}

\author{J. Rafael Sendra, Carlos Villarino\\
       {Dept. of Physics and Mathematics, U. of Alcal\'a}\\
       {Ap. Correos 20}\\
       {E-28871 Alcal\'a de Henares (Madrid, Spain)}\\
       {$\{$Rafael.Sendra,Carlos.Villarino$\}$@uah.es}
\\ \\
       David Sevilla\\
       {U. Center of M\'erida, U. of Extremadura}\\
       {Av. Santa Teresa de Jornet 38}\\
       {E-06800 M\'erida (Badajoz, Spain)}\\
       {sevillad@unex.es}}

\date{}

\maketitle


\begin{abstract}
We present an algorithm that covers any given rational ruled surface with two rational parametrizations. In addition, we present an algorithm that transforms any rational surface parametrization into a new rational surface parametrization without affine base points and such that the degree of the corresponding maps is preserved.
\end{abstract}

2010 Mathematics Subject Classification: 14Q10, 68W30.

Keywords: parametrization of ruled surfaces, normality, base points

\section{Introduction}
One of the most important features of rational varieties, at least in practice, is the possibility to choose between parametric or implicit representations depending on the nature of the problem one is dealing with; examples are the computation of intersections, plotting figures, line and surface integrals, etc. However, when using the parametric representations additional difficulties may appear, and the feasibility of the strategy is affected. In particular, if the parametrization is not surjective, some solving strategies may fail.

In \cite{Ricam}, Example 1 illustrates a situation where the computation of the intersection of two surfaces fails when a non surjective parametrization is used. Let us see another motivating example.

\begin{example}
The Hausdorff distance appears naturally in applications in computer aided design, pattern matching and pattern recognition (see e.g. \cite{Bai}, \cite{Chen}, \cite{Kim}), when measuring the resemblance between two geometric objects. The computation or estimation of the Hausdorff distance implies, in particular, measuring the distance of a point to a set. Let us assume that we want to measure the distance of the point $A=(4/5, 6/5, 1)$ to the surface $S$ defined by $f(x,y,z)=xy-2yz+z^2$. Applying Lagrange multipliers one gets that the distance of $A$ to $S$ is $\sqrt{2}/5=0.283\ldots$ and it is reachable at $B=(1,1,1)\in S$. Nevertheless, in general, approaching this problem using implicit equations turns to be computationally intractable. Instead, one can try to use a parametrization of the surface so that the problem reduces to a  optimization problem without constrains. In our case, $S$ is rational, indeed it is rational ruled surface, and can be parametrized as
 \[ \cP(s,t)=\left((s^2-1)t, s^2t, t(s^2+s)\right). \]
However, if we optimize the function $\|A-\cP(s,t)\|^2$ we find that the minimum is obtained at $(0.889\ldots, -0.042\ldots, 0.155\ldots)$ and the distance is then estimated as $1.504\ldots$. The problem is that $B\in S\smallsetminus \Image(\cP)$, so it cannot be found with the parametrization. Nevertheless, $\cP$ satisfies the hypothesis in Theorem \ref{theorem-recta-critica}, and therefore we can determine that the $S\smallsetminus \Image(\cP)$ is included in the line $(t,t,t)$. Thus, we now optimize the  $\|A-(t,t,t)\|^2$ to get $B$ as solution.
\end{example}

In the case of curves, non-surjectivity is not so important since every rational proper parametrization of a curve may miss at most one point that can be easily computed (see e.g. \cite{AndradasRecio}, \cite{Sendra2002a}). The situation changes when working with rational parametrizations of algebraic surfaces: the missing subset can be of dimension 1.

Some authors have addressed the problem of finding surjective parametrizations of rational surfaces; see \cite{Bajaj}, \cite{Gao} for the case of quadrics or \cite{Ricam} for certain particular types of rational surfaces. Alternatively, one can compute finitely many rational parametrizations such that the union of their images covers the whole surface. This was done for the real general case, in \cite{Bajaj}, by computing a cover with $2^n$ parametrizations, where $n$ is the dimension of the rational variety; i.e. in the surface case, with four pieces. In \cite{issac} we show that, if a surface admits a rational parametrization without projective base points, then it can be covered with at most three pieces. Continuing with this research, in this paper we analyze the problem of covering   rational ruled surfaces. The next example shows that for the same surface, changing the parametrization, can make the missing subset bigger.

\begin{example}
We consider the ruled surface $S$ given by $x^2y-2xy^2+2y^3-3y^2z+3yz^2-z^3=0$. $S$ can be parametrized in {\it ruled form} as
\[ \cP(s,t)= \left(\frac{(s^3-1)t}{s(s+1)}, \frac{s^2t}{s+1}, \frac{(s^2+1)t}{s+1}\right) \]
Applying the algorithm in \cite{Ricam} for computing the critical sets, we obtain that $\cP$ covers all the surface but the three lines $\{x=2y=z\}, \{x=y=z\}, \{y=z=0\}$. However, the reparametrization
\[ \cP(s, ts(s+1))= \left( (s^2-1)t, ts^2, (s^2+s)t \right), \]
that is also in {\it ruled form},  only misses the line $\{x=y=z\}$ (see Theorem \ref{theorem-recta-critica}).
\end{example}

In this paper we prove that a rational ruled surface can always be covered with two rational surface parametrizations in {\it ruled form}. More precisely, we prove that there always exists a rational parametrization that, at most, misses a line on the surface; then the second parametrization covers that line. In order to compute the first parametrization we need parametrizations without affine base points. Later we consider this problem in general, and we present an algorithm that transforms any rational surface parametrization into a new parametrization without affine base points.

\section{Covering Ruled Surfaces: Main Results}\label{sec:covering-theory}

In the sequel, we show that every rational ruled surface can be covered by means of, at most, two rational parametrizations.


\begin{definition}\label{def-ruled-param}
 A \emph{standardized ruled surface parametrization} of a ruled suface $S$ is a triple of rational functions that determines a dominant rational map
 \[\begin{array}{rccc}
  \cP: & k^2 & \dashedrightarrow & S \\ [1em]
   & (s,t) & \mapsto & \left(\displaystyle \frac{r_1(s)+t\cdot p_1(s)}{q(s)}, \frac{r_2(s)+t\cdot p_2(s)}{q(s)}, \frac{r_3(s)+t\cdot p_3(s)}{q(s)} \right)
 \end{array}\]
 such that those $p_i$ that are nonzero have the same degree and do not have any common root (note that not all three of them are zero).
\end{definition}

\begin{remark}\label{remark-pi}
In a standardized ruled surface parametrization, if two of the polynomials $p_i$ are zero, then the third has to be a nonzero constant. In addition, we observe that, in that case, say $p_1=p_2=0$, then $\cP(s,(-r_3+qt)/p_3)=(r_1/q,r_2/q,t)$. So, $S$ is a cylinder over the plane curve $(r_1/q,r_2/q,0)$ and hence, applying the results in \cite{Sendra2002a}, $S$ can be parametrized surjectively.
\end{remark}


\begin{lemma}\label{lemma-stardard}
 Every rational ruled surface admits a standardized ruled surface parametrization.
\end{lemma}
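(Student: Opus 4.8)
The plan is to start from any ruled-form parametrization of $S$ and then normalize it step by step into the shape required by Definition \ref{def-ruled-param}, exploiting three harmless freedoms: rescaling the line parameter $t\mapsto\mu(s)\,t$ with $\mu\in k(s)\smallsetminus\{0\}$ (which rescales the direction vector), shifting $t\mapsto t+\beta(s)$ (which slides the base point along each ruling), and reparametrizing the ruling family by a M\"obius transformation of $s$. None of these changes $S$ or destroys dominance. First I would record that such a starting parametrization exists: since $S$ is ruled it is swept by a one-parameter family of lines, so it carries a map $(s,t)\mapsto A(s)+t\,B(s)$ with $A,B\in k(s)^3$ and $B\neq 0$; and since $S$ is rational the base of the ruling family is a rational curve, which guarantees that the family, and hence $A$ and $B$, can be taken rational in the single parameter $s$. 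This is the ruled-form parametrization I will standardize.

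Next I would normalize the direction. Writing $B=(B_1,B_2,B_3)$ over a common denominator $g$ as $B=\tfrac1g(p_1,p_2,p_3)$ and dividing out $h=\gcd(p_1,p_2,p_3)$ by the rescaling $t\mapsto\tfrac{g}{h}t$, I may assume the direction is a polynomial vector $(p_1,p_2,p_3)$ with $\gcd(p_1,p_2,p_3)=1$; the condition $B\neq 0$ ensures that not all $p_i$ vanish, and in the degenerate case where two of them vanish the third is forced to be a nonzero constant, so the cylinder situation of Remark \ref{remark-pi} is already standardized.

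The crux is to make the nonzero $p_i$ share a common degree without reintroducing a common root. Let $d=\max_i\deg p_i$. I would first shift $s\mapsto s+s_0$ for generic $s_0$, so that every nonzero $p_i$ acquires a nonzero constant term, and then apply the inversion $s\mapsto 1/s$ followed by the rescaling that clears the pole at $s=0$; relabelling the new parameter as $s$, this replaces the direction by $\widehat p_i(s)=s^{d}\,p_i(1/s)$. A short computation shows $\widehat p_i=s^{\,d-\deg p_i}\cdot\mathrm{rev}(p_i)$, where the reversal $\mathrm{rev}(p_i)$ has degree exactly $\deg p_i$ because $p_i(0)\neq 0$; hence every nonzero $\widehat p_i$ has degree exactly $d$. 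Moreover $\gcd(\mathrm{rev}(p_1),\mathrm{rev}(p_2),\mathrm{rev}(p_3))=1$ (a common finite nonzero root would pull back to a common root of the $p_i$, and $s=0$ is excluded since the reversals have nonzero constant terms), while the surviving factor $s^{\,d-\max_i\deg p_i}=1$ contributes nothing, so $\gcd(\widehat p_1,\widehat p_2,\widehat p_3)=1$. This is precisely the equal-degree, no-common-root direction I want; I expect this step to be the main obstacle, since it is the only place where both required properties of the $p_i$ must be secured simultaneously.

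Finally I would assemble the standardized form. With the direction fixed as a good polynomial vector $(p_1,p_2,p_3)$, I write the reparametrized base point over a common denominator $q\in k[s]$ as $A_i=r_i/q$, and I use the remaining freedom to rescale $t$ so that the coefficient of $t$ in the $i$-th component is exactly $p_i/q$. This yields
\[
\cP(s,t)=\left(\frac{r_1+t\,p_1}{q},\ \frac{r_2+t\,p_2}{q},\ \frac{r_3+t\,p_3}{q}\right),
\]
with $q$ a common denominator of all components, the nonzero $p_i$ of equal degree and without common root, and not all $p_i$ zero. Since every transformation used is a birational change of the parameters, or a projective rescaling of the direction, that leaves $S$ unchanged, the resulting map is still a dominant rational parametrization of $S$, and it meets all the requirements of Definition \ref{def-ruled-param}.
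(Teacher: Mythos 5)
Your proposal is correct and follows essentially the same route as the paper: reduce to a ruled-form parametrization $A(s)+tB(s)$ and then standardize it by birational parameter changes, where your generic shift followed by $s\mapsto 1/s$ (with the rescaling $t\mapsto s^d t$) is precisely an instance of the paper's generic M\"obius substitution $s\mapsto (as+b)/(cs+d)$ used to equalize the degrees of the nonzero $p_i$, and your $t$-rescalings secure the no-common-root condition just as the paper's change $(s,t/\Delta(s))$ does. The only difference is at the starting point: the paper invokes \cite{Sonia} for the existence of a ruled-form rational parametrization, whereas you give a brief geometric sketch (rationality of the base curve of the ruling) of that same nontrivial fact; this is the one informal step in your argument, playing the role of the paper's citation.
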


\begin{proof}
By \cite{Sonia}, every rational ruled surface admits a rational parametrization of the form
\[ \cP(s,t)=\left(\frac{\alpha_1(s) +t \beta_1(s)}{\gamma(s)},\frac{\alpha_2(s) +t \beta_2(s)}{\gamma(s)}, \frac{\alpha_3(s) +t \beta_3(s)}{\gamma(s)}\right). \]
If two $\beta_i$ are zero, say $\beta_1=\beta_2=0$, then $\cP(s,t/\beta_3(t))$ is standardized. Let us suppose that at least two $\beta_i$ are nonzero. Then, we can assume that  those components of $\cP(s,t)$ depending on $t$ also do depend on $s$; if this is not the case a suitable change of the form $(s,as+bt)$ provides a parametrization with this property. Furthermore, applying a transformation of the form $(\frac{as+b}{cs+d},t)$,  we can assume that all nonzero $\beta_i$ have the same degree. It only remains to ensure that the gcd of the polynomial coefficients of $t$ are coprime. But this can be achieved by performing the transformation $(s, t/\Delta(s))$, where $\Delta$ is the gcd of the nonzero $\beta_i$.
\end{proof}

Associated to the standardized ruled surface parametrization $\cP(s,t)$, we consider the polynomials
\begin{equation}\label{eq-H}
 \begin{array}{l} H_1=r_1(s)+t\cdot p_1(s)-x\cdot q(s), \\ H_2=r_2(s)+t\cdot p_2(s)-y\cdot q(s), \\
H_3=r_3(s)+t\cdot p_3(s)-z\cdot q(s),
\end{array}
\end{equation}
as well as the polynomials $A_{ij} = p_iH_j-p_jH_i\in k[x,y,z,s]$ for $i\neq j$. We express $A_{ij}$ as
\begin{equation}\label{eq-A}
\begin{array}{r}
A_{12}= qp_2x -qp_1y -\alpha_{12}, \\ A_{13}= qp_3x -qp_1z -\alpha_{13}, \\
A_{23}= qp_3y -qp_2z -\alpha_{23}, \\
\alpha_{ij}=-p_{i}r_j+p_jr_i.
\end{array}
\end{equation}
 We have the following lemma.

\begin{lemma}\label{lemma-crit}
Let $\cP$ be a standardized ruled surface parametrization without affine base points of a surface $S$. Then $S\smallsetminus\Image(\cP)$ is contained in the variety $\cal W$ defined by $\{{\rm LC}_s(A_{ij})\}_{i\neq j}$, where ${\rm LC}_s$ denotes the leading coefficient w.r.t. $s$.
\end{lemma}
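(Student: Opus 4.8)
The plan is to characterize the points of $S$ that lie outside the image of $\cP$ in terms of solvability of a system of equations in the variables $s$ and $t$, and then show that failure of solvability forces the leading coefficients of the $A_{ij}$ to vanish. Concretely, fix a point $P_0=(x_0,y_0,z_0)\in S$. Since $\cP$ is dominant, $P_0\in\Image(\cP)$ precisely when there exists $(s_0,t_0)\in k^2$ with $\cP(s_0,t_0)=P_0$, i.e. when the three polynomials $H_1,H_2,H_3$ of \eqref{eq-H}, evaluated at $(x,y,z)=P_0$, have a common affine solution $(s_0,t_0)$ with $q(s_0)\neq 0$. So my strategy is to suppose $P_0\in S\smallsetminus\Image(\cP)$ and deduce that $P_0$ lies in the variety $\mathcal{W}$ cut out by the leading coefficients ${\rm LC}_s(A_{ij})$.

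First I would eliminate the variable $t$. Because the $p_i$ enter $H_i$ linearly in $t$, the combinations $A_{ij}=p_iH_j-p_jH_i$ are exactly the $t$-free syzygies: evaluated at $P_0$ they are polynomials in $s$ alone (this is precisely the content of \eqref{eq-A}). The key observation to establish is that, since $\cP$ is standardized, the $p_i$ have no common root, so for any fixed $s_0$ one can recover a consistent $t_0$ from the $H_i=0$ relations as soon as the $A_{ij}(P_0,s_0)$ all vanish and $q(s_0)\neq 0$; conversely, a common affine solution of the $H_i$ forces the $A_{ij}(P_0,\cdot)$ to vanish at that $s_0$. Thus, modulo the base-point hypothesis, the question of whether $P_0$ is hit reduces to whether the univariate polynomials $A_{ij}(P_0,s)$ have a common root $s_0$ that is not a root of $q$.

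Next I would analyze what happens when $P_0$ is missed. If $P_0\notin\Image(\cP)$, then the $A_{ij}(P_0,s)$ have no common root off the zero set of $q$. Here the base-point hypothesis does the essential work: a point where all $H_i$ and $q$ vanish simultaneously (in the affine $s,t$ plane) would be an affine base point of $\cP$, which we have excluded, so common roots of the $A_{ij}$ lying on $q=0$ cannot be the source of the failure in a way that still produces a preimage. Hence the only remaining explanation for $P_0$ being missed is that the common-root structure of the $A_{ij}(P_0,s)$ degenerates, and the natural way this happens is through drops in degree: I would argue that if $P_0$ were not in $\mathcal{W}$, i.e. if some ${\rm LC}_s(A_{ij})(P_0)\neq 0$, then the relevant $A_{ij}(P_0,s)$ retains its expected degree and, together with the absence of affine base points, yields an admissible $s_0$ and therefore a preimage — contradicting $P_0\notin\Image(\cP)$.

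I expect the main obstacle to be the bookkeeping around degenerate cases of the standardized form, in particular when one of the $p_i$ vanishes identically (so that only some of the $A_{ij}$ carry genuine $x,y,z$ dependence) and when $q(s_0)=0$ at a candidate root. Handling the latter cleanly requires invoking the no-affine-base-point assumption at exactly the right moment to rule out spurious common zeros on $\{q=0\}$; handling the former requires checking that the definition of $\mathcal{W}$ via \emph{all} pairs $\{{\rm LC}_s(A_{ij})\}_{i\neq j}$ still captures every missed point even when some $p_i=0$ collapse the corresponding equations. Once these cases are organized, the core implication — that a missed point forces the top-degree coefficient of some $A_{ij}$ to vanish — should follow from the elementary fact that a univariate polynomial whose leading coefficient is nonzero attains its roots in $\overline{k}$ and that, under the standing hypotheses, at least one such root furnishes a valid preimage.
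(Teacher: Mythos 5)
Your outline follows the same elimination skeleton as the paper's proof: you recover $t_0$ from a good $s_0$ using that the $p_i$ have no common root, and you invoke the absence of affine base points exactly where the paper does, namely to exclude candidate solutions with $q(s_0)=0$. The genuine gap is in the decisive step. To show that a point $P_0\in S\smallsetminus\mathcal{W}$ is reached, you must produce a \emph{common} root $s_0$ of the system $\{A_{ij}(P_0,s)=0\}_{i\neq j}$ (equivalently, by the syzygy $p_3A_{12}-p_2A_{13}+p_1A_{23}=0$, a common root of $A_{12}(P_0,\cdot)$ and $A_{13}(P_0,\cdot)$ at which $p_1$ and $q$ do not vanish). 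Knowing that \emph{one} ${\rm LC}_s(A_{ij})$ is nonzero at $P_0$ only guarantees that that single univariate polynomial has roots in $\overline{k}$; it says nothing about those roots being shared by the other $A_{ij}(P_0,\cdot)$. So your closing claim that ``at least one such root furnishes a valid preimage'' is precisely the statement that needs proof, and it is not an elementary fact about a single polynomial.

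What is missing is any use of the hypothesis $P_0\in S$ beyond fixing the point: the compatibility of the system $\{A_{ij}(P_0,s)=0\}$ comes from $P_0$ lying in the variety of the elimination ideal. The paper works in $k[x,y,z,s,t,w]$ with $I=(H_1,H_2,H_3,wq-1)$, observes that $\Image(\cP)=\pi(V(I))$ and $S=V(I\cap k[x,y,z])$, and applies the Extension Theorem of Cox--Little--O'Shea stage by stage; the step you are missing is exactly the third stage, which says that a point of $V(I\cap k[x,y,z])$ lifts to the $s$-coordinate whenever the leading coefficients ${\rm LC}_s(A_{ij})$ do not all vanish there. One can also repair your version by hand: $\mathrm{Res}_s(A_{12},A_{13})$ is a $k[x,y,z,s]$-combination of $A_{12}$ and $A_{13}$, hence lies in $I\cap k[x,y,z]$ and vanishes at $P_0\in S$, and a vanishing resultant yields a common root once the leading coefficients are controlled. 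Either way, some elimination-theoretic input of this kind is indispensable and is absent from your sketch; the degenerate cases you flag (some $p_i=0$, roots of $q$) are comparatively minor bookkeeping.
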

\begin{proof}
 In the ring $k[x,y,z,s,t,w]$ we consider the ideal
 \[
  I = (H_1(s,t,x),\ H_2(s,t,y),\ H_3(s,t,z),\ w\cdot q(s)-1).
 \]
Then $\Image(\cP)=\pi(V(I))$ where $\pi(x,y,z,s,t,w)=(x,y,z)$. We will use the extension theorem (see e.g. Chp.3, Th 3, p. 115 in \cite{CoxLittleOshea2007a}) to determine which points $(x,y,z)\in S$ can be lifted to $V(I)$. To this end we define
 \[
  I_1 = I \cap k[x,y,z,s,t], \quad I_2 = I \cap k[x,y,z,s], \quad I_3 = I \cap k[x,y,z].
 \]
 \begin{itemize}
  \item Extension from $I_1$ to $I$: a point $(x_0,y_0,z_0,s_0,t_0)$ has an extension provided $q(s_0)\neq0$. But if $q(s_0)=0$ we see from the equations that $r_i(s_0)+t_0p_i(s_0)=0$ for all $i$, and $(s_0,t_0)$ would be a base point, contrary to the hypotheses.
  \item Extension from $I_2$ to $I_1$: in order to extend a point $(x_0,y_0,z_0,s_0)$ to the coordinate $t$ it suffices that $p_1,p_2,p_3$ do not simultaneously vanish at $s_0$. This always holds since by definition they have no common root. Note that if two of the $p_i$ are zero, the other is a nonzero constant, and the extension is possible.
  \item Extension from $I_3$ to $I_2$:
   a point $(x,y,z)$ can be extended to the coordinate $s$ if for at least one of the polynomials $A_{ij}$ the leading coefficient in $s$ does not vanish at the point.
 \end{itemize}
\end{proof}

\begin{lemma}\label{lemma-recta}
The variety $\cal W$ introduced in Lemma \ref{lemma-crit} is either empty or a line. Furthermore, $\mathcal{W}=\emptyset$ if and only if $\deg(\alpha_{ij})>\deg(p_k q)$, for some different $i,j\in \{1,2,3\}$ and nonzero $p_k$.
\end{lemma}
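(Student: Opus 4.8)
The plan is to compute the forms $\mathrm{LC}_s(A_{ij})$ explicitly and show they cut out a line, the consistency of the resulting linear system being guaranteed by a Pl\"ucker-type syzygy among the $A_{ij}$. Write $n$ for the common degree of the nonzero $p_i$, set $a_i=\mathrm{LC}_s(p_i)$ (so $a_i\neq 0$ exactly when $p_i\neq 0$), and let $m=\deg q$, $q_m=\mathrm{LC}_s(q)\neq 0$. A direct reading of \eqref{eq-A} shows that the coefficient of $s^{m+n}$ in $A_{12},A_{13},A_{23}$ has homogeneous (degree-one) part $q_m\ell_{12},q_m\ell_{13},q_m\ell_{23}$, where $\ell_{12}=a_2x-a_1y$, $\ell_{13}=a_3x-a_1z$, $\ell_{23}=a_3y-a_2z$, together with the constant $-c_{ij}$, where $c_{ij}$ is the coefficient of $s^{m+n}$ in $\alpha_{ij}$. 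First I would record the resulting dichotomy: $\mathrm{LC}_s(A_{ij})$ is a nonzero constant precisely when $\deg\alpha_{ij}>m+n$ (the top power of $s$ then comes from $-\alpha_{ij}$ alone); otherwise $\mathrm{LC}_s(A_{ij})$ is the coefficient of $s^{m+n}$, an affine form (or, when $p_i=p_j=0$, identically zero, since then $\alpha_{ij}=0$ and $A_{ij}=0$). As $\deg(p_kq)=m+n$ for any nonzero $p_k$, this already proves one direction of the ``furthermore'': if some $\deg\alpha_{ij}>m+n$, then one defining equation of $\mathcal{W}$ is a nonzero constant, so $\mathcal{W}=\emptyset$.

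It then remains to treat the case where $\deg\alpha_{ij}\le m+n$ for all $i\neq j$ and to prove $\mathcal{W}$ is then a nonempty line. The key ingredient is the identity $p_3A_{12}-p_2A_{13}+p_1A_{23}=0$, which holds because $A_{ij}=p_iH_j-p_jH_i$. Since each $A_{ij}$ has $s$-degree at most $m+n$ in this case, I would extract the coefficient of the top power $s^{m+2n}$ from both sides; the only contribution to it in each product $p_kA_{ij}$ is $a_k\,L_{ij}$, where I abbreviate $L_{ij}:=\mathrm{LC}_s(A_{ij})$. This yields the linear dependency $a_3L_{12}-a_2L_{13}+a_1L_{23}=0$ among the three forms, now including their constant terms.

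To conclude, relabel so that $p_1\neq 0$, i.e. $a_1\neq 0$ (possible since not all $p_i$ vanish). Then $\ell_{12}=a_2x-a_1y$ and $\ell_{13}=a_3x-a_1z$ are nonzero and linearly independent, since they involve $y$ and $z$ respectively with coefficient $-a_1\neq 0$. Hence in the present case $L_{12}$ and $L_{13}$ are genuine degree-one forms, and $V(L_{12},L_{13})$ is the intersection of two planes with independent normals, that is, a line $\Lambda$. The dependency $a_3L_{12}-a_2L_{13}+a_1L_{23}=0$ with $a_1\neq 0$ expresses $L_{23}$ as a combination of $L_{12}$ and $L_{13}$, so $L_{23}$ vanishes on $\Lambda$; therefore $\mathcal{W}=V(L_{12},L_{13},L_{23})=\Lambda$ is a line. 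The degenerate subcases (one or two of the $p_i$ zero) are absorbed automatically, since whenever $p_i=p_j=0$ the equation $L_{ij}=0$ is simply absent.

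The main obstacle is the consistency of the affine system: a priori the three planes $\mathrm{LC}_s(A_{ij})=0$ could fail to meet simultaneously, which would make $\mathcal{W}$ empty for reasons unrelated to the degree condition and would break the ``if and only if''. The syzygy $p_3A_{12}-p_2A_{13}+p_1A_{23}=0$ is exactly what rules this out, by producing a universal linear relation among the leading-coefficient forms that reduces the system to two always-intersecting planes. Verifying that the top-degree coefficient extraction really delivers this relation, and carefully tracking the degenerate cases where some $p_i$ vanish (so that the naive computation of the constants $c_{ij}$ is avoided), is the delicate part of the write-up.
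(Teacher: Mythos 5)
Your proof is correct and follows essentially the same route as the paper's: the empty case comes from $\mathrm{LC}_s(A_{ij})$ being a nonzero constant when some $\deg\alpha_{ij}>\deg(p_kq)$, and the line case from the syzygy $p_3A_{12}-p_2A_{13}+p_1A_{23}=0$ combined with the rank-2 independence of $\mathrm{LC}_s(A_{12})$ and $\mathrm{LC}_s(A_{13})$. Your extraction of the coefficient of $s^{m+2n}$ is a slightly cleaner way to obtain the linear relation among the leading coefficients (the paper instead argues that the leading terms of $-p_3A_{12}$ and $p_2A_{13}$ cannot cancel), and it lets you absorb uniformly the degenerate subcases with some $p_i=0$ that the paper treats separately.
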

\begin{proof}
Let us assume $p_1\neq 0$. If $\deg(\alpha_{ij})>\deg(p_1 q)$, for some different $i,j\in \{1,2,3\}$, then ${\rm LC}_s(A_{ij})$ is a nonzero constant and $\mathcal{W}=\emptyset$.

If $\deg(\alpha_{ij})\leq \deg(p_1 q)$ for all $i\neq j$, we distinguish two cases. If $p_2=p_3=0$ then $A_{12}=-p_1qy-\alpha_{12}, A_{13}=-p_1qz-\alpha_{13}, A_{23}=0$. Then, $\cal W$ is defined by two linear polynomials, one depending on $y$ and the other on $z$. So $\cal W$ is a line. In the second case, let us assume that at least two $p_i$ are nonzero. Since $p_3 A_{12}-p_2 A_{13}+p_1 A_{23}=0$, then
\[ {\rm LC}_s(p_1 A_{23})={\rm LC}_s(p_1){\rm LC}_s(A_{23})={\rm LC}_s(-p_3A_{12}+p_2 A_{13}). \]
Let us see that
\[ {\rm LC}_s(p_1){\rm LC}_s(A_{23})=-{\rm LC}_s(p_3){\rm LC}_s(A_{12})+{\rm LC}_s(p_2){\rm LC}_s(A_{13}). \]
If either $p_2$ or $p_3$ is zero, the result is clear. So, let none of them be zero. Then, $\deg_s(p_3A_{12})=\deg_s(p_2A_{13})$. Since the leading coefficient of $p_3A_{12}$ does depend on $\{x,y\}$ and the leading coefficient of $p_2A_{13}$ does depend on $\{y,z\}$,
$\deg_s(-p_3A_{12}+p_2 A_{13})=\deg_s(p_3A_{12})=\deg_{s}(p_2A_{13})$, from where the above equality on the leading coefficients follows.
In this situation we get that $\cal W$ is defined by ${\rm LC}_s(A_{12}),{\rm LC}_s(A_{13})$. Now, the result follows by taking into account that the rank of the linear system $\{{\rm LC}_s(A_{12})=0={\rm LC}_s(A_{13})\}$ is 2.
\end{proof}

Using the previous results one gets the following theorem.

\begin{theorem}\label{theorem-recta-critica}
 Let $\cP$ be a standardized ruled surface parametrization without affine base points of a surface $S$. Then $S\smallsetminus\Image(\cP)$ is contained in a line. Furthermore,
\begin{enumerate}
\item if there exists $i,j\in \{1,2,3\}$ such that $i\neq j$ and $\deg(\alpha_{ij})>\deg(p_k q)$ for nonzero $p_k$, then $\cP(s,t)$ is normal.
\item if for all $i,j\in \{1,2,3\}$, with $i\neq j$,  $\deg(\alpha_{ij})\leq \deg(p_k q)$ for nonzero $p_k$, then $S\smallsetminus\Image(\cP)$ is included in the line $V({\rm LC}_s(A_{12}),{\rm LC}_s(A_{13}),{\rm LC}_s(A_{23}))$.
\end{enumerate}
\end{theorem}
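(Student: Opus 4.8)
The plan is to derive the theorem directly by chaining Lemmas \ref{lemma-crit} and \ref{lemma-recta}, the only genuine work being to line up the two degree hypotheses against the emptiness criterion supplied by Lemma \ref{lemma-recta}. First I would apply Lemma \ref{lemma-crit} to conclude that $S\smallsetminus\Image(\cP)$ lies inside the variety $\mathcal{W}$ cut out by the leading coefficients $\{{\rm LC}_s(A_{ij})\}_{i\neq j}$. By Lemma \ref{lemma-recta}, $\mathcal{W}$ is either empty or a line; since the empty set is contained in every line, this already yields the first assertion that $S\smallsetminus\Image(\cP)$ is contained in a line.

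For the itemized part I would split according to the dichotomy of Lemma \ref{lemma-recta}. In case (1), the hypothesis that some pair $i\neq j$ satisfies $\deg(\alpha_{ij})>\deg(p_k q)$ for nonzero $p_k$ is exactly the condition that Lemma \ref{lemma-recta} identifies with $\mathcal{W}=\emptyset$; combined with the inclusion above, this forces $S\smallsetminus\Image(\cP)=\emptyset$, i.e. $\cP$ is surjective, hence normal. In case (2), the complementary hypothesis---namely $\deg(\alpha_{ij})\leq\deg(p_k q)$ for every pair---excludes emptiness, so by Lemma \ref{lemma-recta} the set $\mathcal{W}$ is a line; and since $\mathcal{W}$ is by definition $V({\rm LC}_s(A_{12}),{\rm LC}_s(A_{13}),{\rm LC}_s(A_{23}))$, the stated inclusion follows at once with no further identification required.

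The main obstacle, such as it is, is purely logical bookkeeping: one has to check that the hypotheses in (1) and (2) are exact negations of one another with respect to the same nonzero $p_k$, so that the two cases are exhaustive and match the ``if and only if'' in Lemma \ref{lemma-recta} verbatim. Once that is confirmed, no computation remains beyond the cited lemmas, and the theorem is simply their combination.
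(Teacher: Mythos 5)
Your proposal is correct and matches the paper exactly: the paper gives no separate proof, stating only that the theorem follows ``using the previous results,'' i.e.\ by chaining Lemma \ref{lemma-crit} (the inclusion $S\smallsetminus\Image(\cP)\subseteq\mathcal{W}$) with Lemma \ref{lemma-recta} (the empty-or-line dichotomy and its degree criterion), which is precisely what you do. The bookkeeping point you flag is harmless, since in a standardized parametrization all nonzero $p_k$ have the same degree, so the two hypotheses are genuine negations of each other.
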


In Example \ref{ejemplo-grado-3}, one can see that the parametrization covers all the line $\cal W$ but a point, while in Example \ref{ejemplo-grado-5}, the parametrization only covers two points on the line $\cal W$.

In the previous theorem we have imposed the condition of not having affine base points. Let us see that this is always achievable.


\begin{lemma}\label{lemma-reglada-sin-puntos-base}
Every standardized ruled surface parametrization can be re\-pa\-ra\-me\-tri\-zed into another one without affine base points and where the degree of the induced map is preserved.
\end{lemma}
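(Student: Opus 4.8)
The plan is to reparametrize by two successive \emph{birational} changes in the parameter $t$ that together cancel the factor of $q$ responsible for the affine base points, while leaving the polynomials $p_i$ (hence the standardized form) untouched; since each change is birational, the degree of the induced map onto $S$ will be preserved automatically.

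First I would locate the affine base points. A point $(s_0,t_0)$ is an affine base point exactly when $q(s_0)=0$ and $r_i(s_0)+t_0\,p_i(s_0)=0$ for $i=1,2,3$. Since the nonzero $p_i$ have no common root, at any root $s_0$ of $q$ some $p_{i_0}(s_0)\neq 0$, so $t_0=-r_{i_0}(s_0)/p_{i_0}(s_0)$ is forced and the remaining conditions amount to the vanishing of the relevant $\alpha_{ij}$ at $s_0$; using the syzygy $p_3\alpha_{12}-p_2\alpha_{13}+p_1\alpha_{23}=0$ (the $\alpha$-analogue of the relation among the $A_{ij}$ used in Lemma \ref{lemma-recta}), any two of the three vanishing conditions force the third. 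Hence the affine base points lie precisely over the roots of
\[ G:=\gcd(q,\alpha_{12},\alpha_{13},\alpha_{23}), \]
and $G\mid q$ by construction.

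Next I would kill these base points. The key step, which I expect to be the heart of the argument, is to produce $\lambda\in k[s]$ with $r_i+\lambda\,p_i\equiv 0 \pmod{G}$ for every $i$. Working modulo each prime power $\pi^{m}\mid G$, at the root of $\pi$ some $p_{i_0}$ is a unit, so $\lambda\equiv -r_{i_0}/p_{i_0}$ is forced; compatibility with each other index $j$ is exactly $\alpha_{i_0 j}\equiv 0\pmod{\pi^{m}}$, which holds because $G\mid \alpha_{i_0 j}$, and the Chinese Remainder Theorem assembles the local solutions into a global $\lambda$. Applying the shift $(s,t)\mapsto(s,\,t+\lambda(s))$ then leaves $q$ and the $p_i$ unchanged, replaces each $r_i$ by $r_i':=r_i+\lambda p_i$ (now divisible by $G$), and, since $-p_i(r_j+\lambda p_j)+p_j(r_i+\lambda p_i)=\alpha_{ij}$, leaves every $\alpha_{ij}$ unchanged.

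Finally I would apply the scaling $(s,t)\mapsto(s,\,G(s)\,t)$. The numerators become $r_i'+G\,t\,p_i=G(\tilde r_i+t\,p_i)$ with $\tilde r_i:=r_i'/G$, so the factor $G$ cancels against the denominator, yielding the standardized parametrization with numerators $\tilde r_i+t\,p_i$ and denominator $\hat q:=q/G$. Its associated polynomials are $\tilde\alpha_{ij}=\alpha_{ij}/G$, so by the criterion of the first paragraph its base locus is governed by $\gcd(\hat q,\tilde\alpha_{12},\tilde\alpha_{13},\tilde\alpha_{23})=\gcd(q,\alpha_{12},\alpha_{13},\alpha_{23})/G=1$, whence there are no affine base points. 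Both changes of parameter are birational (with inverses $(s,t-\lambda(s))$ and $(s,t/G(s))$), so the degree of the induced map is multiplied by $1$ and preserved. The only nonformal point, and the one I would write out in full, is the solvability of $r_i+\lambda p_i\equiv 0 \pmod{G}$.
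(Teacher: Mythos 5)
Your argument is correct, but it follows a genuinely different route from the paper's. The paper chooses an interpolating polynomial $f(s)$ through the affine base points, applies $(s,1/t+f(s))$ followed by $(s,1/(\widetilde{Q}t))$ with $\widetilde{Q}=\gcd(Q_1,Q_2,Q_3,q)$, and then \emph{iterates}: a single pass need not remove every base point, since interpolation only matches $-r_{i_0}/p_{i_0}$ at each root of $q$ to first order, so base points can survive at multiple roots of $q$ (this actually happens in Example \ref{ejemplo-grado-4}); termination is instead guaranteed by the strict decrease of $\deg q$. Your congruence $r_i+\lambda p_i\equiv 0\pmod{G}$ with $G=\gcd(q,\alpha_{12},\alpha_{13},\alpha_{23})$ matches $-r_{i_0}/p_{i_0}$ to the full multiplicity of each irreducible factor of $G$, and that is precisely what makes one pass suffice: the new data are $\tilde r_i=(r_i+\lambda p_i)/G$, $\hat q=q/G$, $\tilde\alpha_{ij}=\alpha_{ij}/G$, whose gcd is $1$, so no common root (hence no base point) remains. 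The only delicate step, solvability of the congruence, is correctly reduced --- via the syzygy $p_3\alpha_{12}-p_2\alpha_{13}+p_1\alpha_{23}=0$ and the fact that the nonzero $p_i$ have no common factor --- to $G\mid\alpha_{i_0 j}$, which holds by definition of $G$; the Chinese Remainder assembly is standard. What your approach buys: no iteration, a reparametrization $(s,G(s)t+\lambda(s))$ that is affine in $t$ so the standardized ruled form is visibly preserved throughout, and a $\lambda$ computable by the extended Euclidean algorithm rather than the radical-ideal Gr\"obner computation of Lemma \ref{lemma-interpolation}. What the paper's approach buys: it needs only an interpolating polynomial, and the same device transfers essentially unchanged to arbitrary non-ruled parametrizations in Theorem \ref{th: remove bpt}, where your $\alpha_{ij}$-based multiplicity bookkeeping has no obvious analogue. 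Both arguments preserve the degree of the induced map for the same reason: all changes of variables used are birational in $t$ over $k(s)$.
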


\begin{proof}
Let us assume first that all the $p_i$ are nonzero. Let $f(s)$ be a polynomial such that $f(s_1)=t_1$ for some base point $(s_1,t_1)$. With the change $(s,1/t+f(s))$ the resulting parametrization is
 \[ \left( \frac{Q_i(s)\cdot t+p_i(s)}{t\cdot q(s)} \right)_{i=1,2,3} \qquad \mbox{where $Q_i(s)=r_i(s)+f(s)p_i(s)$}. \]
 Since $s_1$ is a common root of $q$ and the $Q_i$, if we define $\widetilde{Q}=\gcd(Q_1,Q_2,Q_3,q)$, we have $\deg(\widetilde{Q})\geq1$. Now with the change $(s,1/(\widetilde{Q}t))$ we obtain the new parametrization
 \[ \left( \frac{Q_i/\widetilde{Q} + p_i\cdot t}{q/\widetilde{Q}} \right)_{i=1,2,3}. \]
 Note that this is standardized as well, but the degree of the denominator is strictly smaller than the original. Therefore repeating this procedure finitely many times we obtain a standardized parametrization without affine base points (since that is the case when the denominator is a constant). Finally, note that all transformations considered are birational, and hence the degrees of the maps are preserved.

 If any $p_i=0$, the corresponding component of the parametrization does not change after the first reparametrization, resulting in $Q_i=r_i$. The second reparametrization does not change the component as well, but the common factor $\widetilde{Q}$ of $r_i$ and $q$ can be directly simplified in that fraction.
\end{proof}

\begin{remark}\label{remark-pto-bases}
 In the previous result we can have some control on the removal of base points that occurs effectively in each iteration.
 Namely, suppose that the zeros of $q$ are $s_1,\ldots,s_l$ where $s_1,\ldots,s_k$, $k\leq l$, are the first coordinates of the base points of $\cP$. Note that for each of $s_1,\ldots,s_k$ there is exactly one base point $(s_i,t_i)$.

 Let $f(s)$ be an interpolating polynomial of $$(s_1,t_1),\ldots,(s_k,t_k), (s_{k+1},0),\ldots,(s_l,0).$$ As before, we define $Q_i(s)=r_i(s)+f(s)p_i(s)$ and $\widetilde{Q}=\gcd(Q_1,Q_2,Q_3,q)$, and make the change $(s,\widetilde{Q}t+f(s))$ to obtain
 \[ \cP^{(1)} = \left( \frac{r_i^{(1)} + p_i\cdot t}{q^{(1)}} \right)_{i=1,2,3} \qquad \mbox{where $r_i^{(1)}=Q_i/\widetilde{Q}$, \ $q^{(1)}=q/\widetilde{Q}$}. \]
 Note that the roots of $\widetilde{Q}$ are precisely $s_1,\ldots,s_k$.
 We will show that $\cP^{(1)}$ has at most as many base points as the number of multiple roots of $q$ among $s_1,\ldots,s_k$. To this end let $(\alpha,\beta)$ be a base point of $\cP^{(1)}$. Since $\alpha$ is a root of $q^{(1)}$, it must be a multiple root of $q$, say $\alpha=s_i$. If $i>k$ then $Q_i(\alpha)=r_i(\alpha)$. Now, by definition of $\beta$, we have $\beta=-r_i^{(1)}(\alpha)/p_i(\alpha)$ for some $i$. But then the point $(\alpha,\beta\widetilde{Q}(\alpha))$ is a base point of $\cP$, contradiction.
\end{remark}

\begin{corollary}\label{cor+2+1}
Every rational ruled surface can be parametrized in an standardized way that misses at most one line.
\end{corollary}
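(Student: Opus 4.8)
The plan is to obtain the corollary by chaining together the three main results established above, since each of the required ingredients is already in hand. First I would start with an arbitrary rational ruled surface $S$ and invoke Lemma \ref{lemma-stardard} to produce a standardized ruled surface parametrization $\cP$ of $S$. At this stage $\cP$ may still possess affine base points, so Theorem \ref{theorem-recta-critica} does not yet apply and no bound on the missing subset is available.

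Next I would apply Lemma \ref{lemma-reglada-sin-puntos-base} to replace $\cP$ by a reparametrization $\tcP$ that has no affine base points, that is again standardized, and that induces a rational map of the same degree. The feature I would lean on here is that the output of Lemma \ref{lemma-reglada-sin-puntos-base} is explicitly standardized: the transformations used in its proof are designed to preserve the defining conditions of Definition \ref{def-ruled-param} (equal degrees and coprimality of the nonzero coefficients of $t$), so that the hypotheses needed downstream are genuinely met and the surface parametrized is still $S$.

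Finally, with $\tcP$ now a standardized ruled surface parametrization of $S$ without affine base points, I would invoke Theorem \ref{theorem-recta-critica}, which guarantees that $S\smallsetminus\Image(\tcP)$ is contained in a line. In case~1 of that theorem the parametrization is normal, so nothing is missed; in case~2 the missing set sits inside the line $V({\rm LC}_s(A_{12}),{\rm LC}_s(A_{13}),{\rm LC}_s(A_{23}))$. Either way $\tcP$ is a standardized parametrization of $S$ that misses at most one line, which is exactly the assertion.

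Since each of the three steps is already proved, I do not expect a substantial obstacle. The only point that needs care is checking that both the \emph{standardized} property and the dominance of the map survive the base-point removal of Lemma \ref{lemma-reglada-sin-puntos-base}; both are guaranteed because every transformation used there is birational and keeps the normalization conditions intact. Thus the corollary follows immediately from Lemma \ref{lemma-stardard}, Lemma \ref{lemma-reglada-sin-puntos-base}, and Theorem \ref{theorem-recta-critica}.
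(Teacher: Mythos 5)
Your proof is correct and follows exactly the chain the paper intends: Lemma \ref{lemma-stardard} to get a standardized parametrization, Lemma \ref{lemma-reglada-sin-puntos-base} to remove affine base points while staying standardized, and Theorem \ref{theorem-recta-critica} to confine the missing set to a line. The paper leaves this corollary without an explicit proof precisely because it is this immediate combination, so nothing further is needed.
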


\begin{theorem}\label{theorem+2+2}
Every rational ruled surface can be covered with at most two surface parametrizations.
\end{theorem}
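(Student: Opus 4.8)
The plan is to combine Corollary \ref{cor+2+1} with a single extra parametrization that recovers the one line possibly lost by the first. The key reduction is this: let $\cP_1$ be the standardized parametrization without affine base points furnished by Corollary \ref{cor+2+1}, and let $L$ be the line of Theorem \ref{theorem-recta-critica}, so that $S\smallsetminus\Image(\cP_1)\subseteq L$. To cover $S$ with two pieces it then suffices to produce one further parametrization $\cP_2$ of $S$ with $L\subseteq\Image(\cP_2)$: since $\Image(\cP_1)\supseteq S\smallsetminus L$, we get $\Image(\cP_1)\cup\Image(\cP_2)\supseteq(S\smallsetminus L)\cup L=S$, and both images lie in $S$, so the union is exactly $S$. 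Note that with this reduction I never have to analyze the critical line of $\cP_2$ itself; I only have to cover $L$ entirely.

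If $\cP_1$ is already normal (case (1) of Theorem \ref{theorem-recta-critica}, or the cylinder situation of Remark \ref{remark-pi}), one parametrization suffices and there is nothing to prove. So assume $S\smallsetminus\Image(\cP_1)$ is contained in the line $L=V(\mathrm{LC}_s(A_{12}),\mathrm{LC}_s(A_{13}),\mathrm{LC}_s(A_{23}))$ of case (2). The geometric meaning of Lemmas \ref{lemma-crit} and \ref{lemma-recta} is that the only obstruction to surjectivity is concentrated in the leading coefficients with respect to $s$, that is, in the behaviour of the rulings as $s\to\infty$; accordingly $L$ is precisely the limiting ruling of $\cP_1$ at $s=\infty$. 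This suggests covering $L$ by moving $s=\infty$ to a finite parameter.

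Concretely, I would reparametrize the base curve of rulings, a copy of $\mathbb{P}^1$ coordinatized by $s$, through a M\"obius transformation $s=\phi(u)$ with $\phi(u_0)=\infty$ for some finite $u_0$ (for instance $\phi(u)=1/u$, $u_0=0$). Substituting into $\cP_1$ and clearing denominators yields a rational ruled parametrization of the same surface $S$; applying the normalization of Lemma \ref{lemma-stardard} followed by the base-point removal of Lemma \ref{lemma-reglada-sin-puntos-base} turns it into a standardized parametrization $\cP_2$ of $S$ without affine base points, in which the line $L$ is now the ruling attached to a finite parameter value. Covering of a finite ruling is then immediate: for a standardized parametrization without affine base points and a finite parameter $u_1$ with nonzero denominator, the map $t\mapsto\cP_2(u_1,t)$ is a nondegenerate affine parametrization of the whole ruling line (its $t$-coefficient vector is nonzero because the $p_i$ have no common root), hence surjects onto $L$. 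Thus $L\subseteq\Image(\cP_2)$ and, by the reduction above, $\Image(\cP_1)\cup\Image(\cP_2)=S$.

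The step I expect to be delicate is the bookkeeping in the previous paragraph: one must ensure that the normalization and base-point removal do not spoil the parameter value carrying $L$, i.e. that after all cleanup this value stays finite and has nonzero denominator with no base point above it. Clearing denominators after $s=1/u$ typically creates an affine base point exactly over $u=0$ (because the reversed denominator vanishes there), but this is precisely the base point that Lemma \ref{lemma-reglada-sin-puntos-base} removes, restoring $u=0$ as a good parameter; and the degree-equalizing M\"obius change inside Lemma \ref{lemma-stardard} can be chosen so as not to send the $L$-ruling to infinity, since it is only required to equalize the degrees of the $p_i$ and there is ample freedom in its choice. Verifying that these choices can always be made simultaneously is the technical heart of the argument; the reduction and the finite-ruling surjectivity are then formal.
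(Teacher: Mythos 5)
Your overall strategy --- reduce to covering the single critical line $L=\mathcal{W}$, and obtain the second parametrization by the substitution $s\mapsto 1/s$ so that the ruling ``at infinity'' becomes the ruling at a finite parameter --- is exactly the idea of the paper's proof of Theorem \ref{theorem+2+2}. The problem is that you defer to ``delicate bookkeeping'' precisely the step that carries the mathematical content, and as written that step is a genuine gap. For your $\cP_2$ two things must be verified: (i) that after clearing denominators in $\cP_1(1/u,t)$, re-standardizing and removing base points, the resulting denominator does \emph{not} vanish at the parameter value carrying $L$ --- note that ``no affine base point over $u=0$'' is strictly weaker than this, since the denominator can vanish at $u=0$ without the numerators all vanishing simultaneously, in which case $\cP_2(0,t)$ is undefined for every $t$ and the ruling escapes to the plane at infinity, covering no point of $L$; and (ii) that the affine line obtained at that parameter value really is $\mathcal{W}$, which you assert from ``the geometric meaning'' of Lemmas \ref{lemma-crit} and \ref{lemma-recta} but which is itself the computation to be done. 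Point (i) is exactly where the case-(2) hypothesis $\deg(\alpha_{ij})\le\deg(p_kq)$ must enter, and your sketch never invokes it at that stage.

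The paper avoids this entire bookkeeping with one extra move: before inverting $s$, it reparametrizes $t$ by $t\mapsto (qt-r_3)/p_3$, giving $\mathcal{Q}(s,t)=\left(\frac{qp_1t+\alpha_{13}}{p_3q},\frac{qp_2t+\alpha_{23}}{p_3q},t\right)$. The degree hypothesis of case (2) then says precisely that numerator and denominator of each of the first two components have the same degree in $s$, so after $s\mapsto 1/s$ no factor of $s$ remains in the denominators, $\mathcal{H}(0,t)$ is defined, and a one-line computation shows it parametrizes $\mathcal{W}$ --- no re-standardization or base-point removal of the second piece is needed at all. Your version can very likely be completed, but to do so you would have to carry out the degree count for $u^Mq(1/u)$, $u^Mp_i(1/u)$ and $u^Mr_i(1/u)$ explicitly and show that the spurious power of $u$ cancels; the paper's $t$-reparametrization is the cleaner route and is the piece your argument is missing.
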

\begin{proof} By Lemma \ref{lemma-reglada-sin-puntos-base} we can assume that we are given an standardized  pa\-ra\-me\-tri\-za\-tion $\cP$ without affine base points. We use for $\cP$ the notation in Definition \ref{def-ruled-param} and in the previous results. By Theorem \ref{theorem-recta-critica}, we can also assume that $\max\{\deg(\alpha_{12}),\deg(\alpha_{13}),\deg(\alpha_{23})\}\leq \deg(p_kq)$ for nonzero $p_k$.

First we assume that all $p_i(s)$ are nonzero. Consider the reparametrizations
\[ \mathcal{Q}(s,t)=\cP\left(s,\frac{qt-r_3}{p_3}\right)=\left(\frac{q p_1t+\alpha_{13}}{p_3q},\frac{qp_2t+\alpha_{23}}{p_3q},t \right) \]
and
\[ \mathcal{H}(s,t)=\mathcal{Q}\left(\frac{1}{s},t\right).\]
Because of our above degree assumptions, we know that the degrees in $s$ of the numerator and denominator of each (first and second) component of $\mathcal{Q}$ are equal. Therefore, $s$ is not a factor of the denominators in $\mathcal{H}(s,t)$. So, $\mathcal{H}(0,t)$ is well defined and, indeed,
\[ \mathcal{H}(0,t)=\left(\frac{{\rm LC}_s(q p_1t+\alpha_{13})}{{\rm LC}_s(p_3q)},\frac{{\rm LC}_s(qp_2t+\alpha_{23})}{{\rm LC}_s(p_3q)},t \right) \]
that parametrizes the line $\cal W$.

A similar argument with obvious modifications works in the case when some $p_i$ are zero.
\end{proof}

\section{Covering Ruled Surfaces: Algorithm and Examples}\label{sec:covering-algorithm}

In order to derive an algorithm from the previous results, we need to algorithmically show how to remove the affine base points of an standardized ruled parametrization. This, essentially, requires to compute interpolation polynomials (see proof of Lemma \ref{lemma-reglada-sin-puntos-base} and Remark \ref{remark-pto-bases}). In the following lemma we see how to actually compute the interpolation polynomial  without explicitly determining the coordinates of the base points; i.e. without approximating roots.

\begin{lemma}\label{lemma-interpolation}
Let $\cP(s,t)$ be an standardized ruled parametrization as in Def. \ref{def-ruled-param} with affine base points. Let $I$ be the ideal generated by $\{p_1t+r_1,p_2t+r_2,p_3t+r_3,q\}$ in $k[s,t]$. Then,  there exists a polynomial of the form $t-f(s)$ in $\sqrt{I}$ where $f(s)$  interpolates the affine base points of $\cP(s,t)$.
\end{lemma}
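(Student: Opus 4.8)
The plan is to recognize that the affine base points of $\cP$ are exactly the points of the affine variety $V(I)$, and then to produce the element $t-f(s)$ of $\sqrt{I}$ by appealing to the Nullstellensatz once the base points are known to admit an interpolating function in $s$. Indeed, a point $(s_0,t_0)\in k^2$ is an affine base point of $\cP$ precisely when the three numerators and the denominator vanish simultaneously there, that is, when $p_j(s_0)t_0+r_j(s_0)=0$ for $j=1,2,3$ and $q(s_0)=0$; this is exactly the condition $(s_0,t_0)\in V(I)$. So the whole task reduces to finding $f(s)$ with $f(s_0)=t_0$ for every $(s_0,t_0)\in V(I)$ and then checking $t-f(s)\in\sqrt{I}$.

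First I would establish the structural fact that $V(I)$ is finite and that its points have pairwise distinct $s$-coordinates. Since $V(I)\subseteq V(q)$, every base point lies on one of the finitely many vertical lines $s=s_i$ cut out by the roots of $q$. On such a line, because the nonzero $p_j$ have no common root (Definition \ref{def-ruled-param}), at least one of them, say $p_{j_0}$, does not vanish at $s_i$; the equation $p_{j_0}(s_i)t+r_{j_0}(s_i)=0$ then pins down the unique value $t_i=-r_{j_0}(s_i)/p_{j_0}(s_i)$. Hence each vertical line meets $V(I)$ in at most one point, $V(I)$ is finite, and its points carry distinct first coordinates. This is precisely the observation already recorded in Remark \ref{remark-pto-bases}.

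With distinct $s$-coordinates in hand, the interpolation is unobstructed: writing $V(I)=\{(s_1,t_1),\dots,(s_k,t_k)\}$, there is a polynomial $f(s)$ with $f(s_i)=t_i$ for all $i$, and this $f$ interpolates the affine base points by construction. It then remains to verify the membership in $\sqrt{I}$. By construction $t-f(s)$ vanishes at every point of $V(I)$, since $(t-f(s))(s_i,t_i)=t_i-f(s_i)=0$. By Hilbert's Nullstellensatz we have $\sqrt{I}=I(V(I))$, so any polynomial vanishing on all of $V(I)$ lies in $\sqrt{I}$; in particular $t-f(s)\in\sqrt{I}$, as claimed.

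I expect the only real subtlety to be conceptual rather than computational: the existence argument above is immediate once the distinct-$s$-coordinate property is isolated, but the genuine content of the lemma, and the reason it is stated at all, is algorithmic. The point is that $f$ can be recovered directly from a radical (or Gr\"obner basis) computation for $I$, for instance by reading off the element of shape $t-f(s)$ in a basis for an elimination order that eliminates $t$ last, so that the reparametrization of Lemma \ref{lemma-reglada-sin-puntos-base} and Remark \ref{remark-pto-bases} can be carried out symbolically, never approximating the roots $s_i$ or the values $t_i$. Making that extraction effective, and arguing that such an element of the desired shape must appear in $\sqrt{I}$, is where the care is needed.
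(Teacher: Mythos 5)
Your proof is correct and follows essentially the same route as the paper: the distinct-$s$-coordinate property (which the paper cites from Remark \ref{remark-pto-bases} and you reprove directly from the no-common-root condition on the $p_i$), followed by interpolation and the observation that $t-f(s)$ vanishes on $V(I)$ and hence lies in $\sqrt{I}$. The extra detail you supply on why each root of $q$ carries at most one base point is exactly the argument the paper delegates to the remark.
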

\begin{proof} As observed in Remark \ref{remark-pto-bases}, all affine base points of $\cP$ have different $s$-coordinate. Thus, there exists an interpolating polynomial $f(s)$ passing through all base points. So,
 $t-f(s)$ vanishes on all the points in the variety of $I$. So, $t-f(s)\in \sqrt{I}$.
\end{proof}

Now, we are ready to outline our algorithm.

\begin{algorithm}\label{alg-1}
Given a rational parametrization $\cP(s,t)$ of a ruled surface $\cal S$, the algorithm computes a covering of $\cal S$.
\begin{enumerate}
\item If $\cP$ is not of the form $((r_i(s)+p_i(s)t)/q(s))_{i=1,2,3}$ apply the algorithm in \cite{Sonia} and replace $\cP$.
\item If $\cP$ is not in standardized form (see Def. \ref{def-ruled-param}) do the following
\begin{enumerate}
\item If some of the numerators of $\cP$ does not depend on $s$, replace $\cP$ by $\cP(s, as +
bt)$ with $a,b\in k$.
\item If the polynomials $p_1,p_2,p_3$ do not have the same degree, replace $\cP$ by $\cP((as+b)/(cs+d), t)$ where $a,b,c,d\in k$ and $ad-bc\neq 0$.
\item Replace $\cP$ by reparametrization $\cP(s, t/\Delta(s))$ where $\Delta$ is the gcd of the nonzero $p_i$.
\end{enumerate}
\item Calculate $\sqrt{I}$,  where $I$ is the ideal generated by $\{p_1t+r_1,p_2t+r_2,p_3t+r_3,q\}$ in $k[s,t]$. This can be done with a relatively inexpensive Gr\"obner basis computation (see e.g.  Ex 2.3.23 and 24 in \cite{Adams} and \cite{Seidenberg}).
 \item Calculate a Gr\"obner basis of $\sqrt{I}$ with respect to the lexicographical ordering $t>s$.
 \begin{enumerate}
  \item If the basis does not contain a polynomial of the form $t-f(s)$, by elementary properties of Gr\"obner basis it follows that there is no polynomial of that form in $\sqrt{I}$, so by  Lemma \ref{lemma-interpolation} we know that $\cP$ does not have affine base points.
  \item In the other case, let $t-f(s)$ belong to the basis, do
  \begin{enumerate}
  \item Replace $\cP$ by  $\cP(s, 1/t + f(s))$.
  \item Let $\widetilde{Q}$ be the gcd of the coefficients of $t$ of the numerators of $\cP$ and $q$, then replace $\cP$ by
  $\cP(s, 1/(\widetilde{Q}t))$.
  \item Repeat Steps 3 and 4 while $\sqrt{I}$ has an element of the form $s-f(t)$.
  \end{enumerate}
 \end{enumerate}
\item Compute the polynomials $\alpha_{ij}$  (see (\ref{eq-A})).
 \item If there exist $i,j\in \{1,2,3\}$ such that $i\neq j$ and $\deg(\alpha_{ij}) > \deg(p_kq)$ for nonzero $p_k$, RETURN $\cP(s,t)$.
 \item Assume that $p_k\neq 0$, compute $$\mathcal{H}(s,t) = \cP\left(\frac{1}{s},\frac{q(1/s)t-r_k(1/s)}{p_k(1/s)}\right)$$
and RETURN $[\cP(s,t),\mathcal{H}(s,t)]$.
\end{enumerate}
\end{algorithm}

\begin{remark}
 Remark \ref{remark-pto-bases} shows that, in general, the number of iterations of the loop in Step 4 is small. Indeed it is bounded by the maximum multiplicity of the roots of the denominator $q(t)$ of $\cP$.

In addition we observe that all parametrizations in the output of the algorithm are of ruled form, that is, of the form $(\alpha_1(s),\alpha_2(s),\alpha_3(s))+$\linebreak $t(\beta_1(s),\beta_2(s),\beta_3(s))$.
\end{remark}

Let us illustrate Algorithm \ref{alg-1} by some examples.

\begin{example}\label{ejemplo-grado-3}
We consider the parametrization
\[  \cP(s,t) =\left( \frac{r_1(s)+t p_1(s)}{q(s)}, \frac{r_2(s)+t p_2(s)}{q(s)}, \frac{r_3(s)+t p_3(s)}{q(s)} \right) = \]
\[
\left(  {\frac {t \left( {s}^{2}+s+1 \right) +s}{s \left( s-1 \right) }},{
\frac {t \left( {s}^{2}+2\,s \right) +s}{s \left( s-1 \right) }},{
\frac {t \left( {s}^{2}+1 \right) +s}{s \left( s-1 \right) }}\right).
\]
It parametrizes the degree 3 ruled surface defined by

\noindent $F(x,y,z)=5\,{x}^{3}-9\,{x}^{2}y-8\,{x}^{2}z+5\,x{y}^{2}+11\,xyz+3\,x{z}^{2}-{y}
^{3}-3\,{y}^{2}z-3\,y{z}^{2}-4\,{x}^{2}+4\,xy+4\,xz-{y}^{2}-2\,yz-{z}^
{2}.$

We observe that $\cP(s,t)$ is in standardized form. So, we go to Step 3 in Algorithm \ref{alg-1}. $I$ is the ideal generated by
 $\{p_1t+r_1,p_2t+r_2,p_3t+r_3,q\}$ in $k[s,t]$. We get $\sqrt{I}=I$, and a Gr\"obner basis w.r.t. the lexicographic ordering $t>s$ (Step 4) is
 $\{s,t\}$. So, in Step 4 (b) we get that $f(s)=0$; note that the origin is the only affine base point. In Step 4 (b, i), we replace $\cP(s,t)$ by
 $\cP(s,1/t+0)$, namely
 \[ \cP(s,t)=\left({\frac {{s}^{2}+st+s+1}{ts \left( s-1 \right) }},{\frac {s(s+2+t)}{ts
 \left( s-1 \right) }},{\frac {{s}^{2}+st+1}{ts \left( s-1 \right) }}\right). \]
 In Step 4 (b, ii), $\widetilde{Q}=\gcd(s,s,s,s(s-1))=s$. So, we replace $\cP$ by $\cP(s, 1/(st))$, namely
 \begin{equation}\label{eq-ex-1}
  \cP(s,t)=\left( {\frac {{s}^{2}t+st+t+1}{s-1}},{\frac {{s}^{2}t+2\,st+1}{s-1}},{
\frac {{s}^{2}t+t+1}{s-1}}\right).
\end{equation}
Now, the lexicographic order Gr\"obner basis of $\sqrt{I}$ is $\{1\}$, hence $\cP$ does not have base points. In Step 5 we get
\[ \alpha_{12}=s-1,\alpha_{13}=-s,\alpha_{23}=2\,s-1. \]
In Step 6 the boolean conditions do not hold. In Step 7 we calculate the parametrization
\[ \mathcal{H}=\cP\left(\frac{1}{s},\frac{q(1/s)t-r_3(1/s)}{p_3(1/s)}\right)= \]
\[ \left({\frac {{s}^{3}t-2\,{s}^{3}-{s}^{2}-2\,s-t}{ \left( {s}^{2}+1
 \right)  \left( s-1 \right) }},-{\frac {{s}^{3}-2\,{s}^{2}t+2\,{s}^{2
}+st+2\,s+t}{ \left( {s}^{2}+1 \right)  \left( s-1 \right) }},{\frac {
st-2\,s-t}{s-1}}\right).\]
The algorithm returns the covering $[\cP(s,t),\mathcal{H}(s,t)]$ where $\cP$ is the parametrization in (\ref{eq-ex-1}).
\begin{figure}[ht]
 \centering
 \includegraphics[scale=0.7]{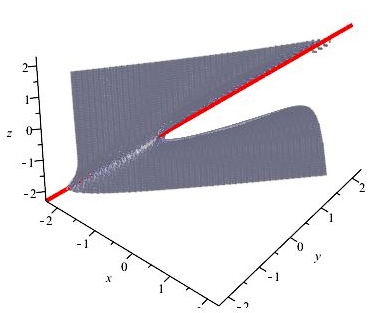}
 \caption{The surface in Example \ref{ejemplo-grado-3} and line $(t,t,t)$.}
 \label{fig: ex-1}
\end{figure}
Continuing with the example, since $\cP$ in (\ref{eq-ex-1}) is an standardized ruled parametrization without affine base points, by Theorem \ref{theorem-recta-critica}, the possible missing points of $\cP$ are included in the line defined by $\{x-y=0, x-z=0, y-z=0\}$, that is, the line $(t,t,t)$; see Fig. \ref{fig: ex-1}. In fact, $\cP$ covers all the line except the origin, by taking $\cP(s,0)$. Nevertheless,  ${\cal H}$ covers the whole line by taking ${\cal H}(0,t)$.
\end{example}

\begin{example}\label{ejemplo-grado-5}
We consider the parametrization
\[  \cP(s,t) =\left( \frac{r_1(s)+t p_1(s)}{q(s)}, \frac{r_2(s)+t p_2(s)}{q(s)}, \frac{r_3(s)+t p_3(s)}{q(s)} \right) = \]
\[
\left(  {\frac {t{s}^{3}+2\,{s}^{2}+1}{{s}^{2}-1}},{\frac {t \left( {s}^{3}+2
 \right) +s+1}{{s}^{2}-1}},{\frac {t \left( {s}^{3}+s+1 \right) +1}{{s
}^{2}-1}}\right).
\]
It parametrizes the degree 5 ruled surface defined by

\noindent $ F(x,y,z)=9\,{x}^{5}-45\,{x}^{4}y-24\,{x}^{4}z+77\,{x}^{3}{y}^{2}+80\,{x}^{3}yz-
15\,{x}^{3}{z}^{2}-83\,{x}^{2}{y}^{3}+34\,{x}^{2}{y}^{2}z-147\,{x}^{2}
y{z}^{2}+78\,{x}^{2}{z}^{3}+66\,x{y}^{4}-60\,x{y}^{3}z-189\,x{y}^{2}{z
}^{2}+460\,xy{z}^{3}-236\,x{z}^{4}+16\,{y}^{5}-86\,{y}^{4}z+111\,{y}^{
3}{z}^{2}+118\,{y}^{2}{z}^{3}-332\,y{z}^{4}+168\,{z}^{5}-104\,{x}^{4}+
319\,{x}^{3}y+108\,{x}^{3}z-207\,{x}^{2}{y}^{2}-621\,{x}^{2}yz+452\,{x
}^{2}{z}^{2}+147\,x{y}^{3}-382\,x{y}^{2}z+1034\,xy{z}^{2}-848\,x{z}^{3
}+297\,{y}^{4}-1549\,{y}^{3}z+3390\,{y}^{2}{z}^{2}-3380\,y{z}^{3}+1344
\,{z}^{4}+304\,{x}^{3}-741\,{x}^{2}y+389\,{x}^{2}z-267\,x{y}^{2}+1761
\,xyz-2314\,x{z}^{2}-4\,{y}^{3}+922\,{y}^{2}z-1930\,{z}^{2}y+1816\,{z}
^{3}-70\,{x}^{2}+597\,yx-2060\,zx+748\,{y}^{2}-1703\,zy+2940\,{z}^{2}-
761\,x-62\,y+2085\,z+746.$

We observe that $\cP(s,t)$ is in standardized form, so we go to Step 3 in Algorithm \ref{alg-1}. $I$ is the ideal generated by
 $\{p_1t+r_1,p_2t+r_2,p_3t+r_3,q\}$ in $k[s,t]$. We get $\sqrt{I}=I$, and a Gr\"obner basis w.r.t. the lexicographic ordering $t>s$ (Step 4) is
 $\{1\}$. Thus $\cP(s,t)$ does not have affine base points and we go to Step 5 to get
\[ \alpha_{12}=2{s}^{5}-{s}^{4}+4{s}^{2}+2 ,
\alpha_{13}=2{s}^{5}+2{s}^{3}+2{s}^{2}+s+1,
\alpha_{23}=-{s}^{4}-{s}^{2}-2s+1. \]
In Step 6 the boolean conditions do not hold. In Step 7 we get the parametrization
\[ \mathcal{H}=\cP\left(\frac{1}{s},\frac{q(1/s)t-r_3(1/s)}{p_3(1/s)}\right)= \]
\[ \left(-{\frac {{s}^{5}+{s}^{4}+2\,{s}^{3}-t{s}^{2}+4\,{s}^{2}+t+2}{ \left(
{s}^{3}+{s}^{2}+1 \right)  \left( {s}^{2}-1 \right) }},\right.\]\[ \left. {\frac {2\,{s}^
{5}t-3\,{s}^{5}-2\,{s}^{4}-2\,t{s}^{3}-{s}^{3}+t{s}^{2}-2\,{s}^{2}-s-t
}{ \left( {s}^{3}+{s}^{2}+1 \right)  \left( {s}^{2}-1 \right) }},{
\frac {t{s}^{2}-2\,{s}^{2}-t}{{s}^{2}-1}}\right).\]
The algorithm returns the covering $[\cP(s,t),\mathcal{H}(s,t)]$.
\begin{figure}[ht]
 \centering
 \includegraphics[scale=0.5]{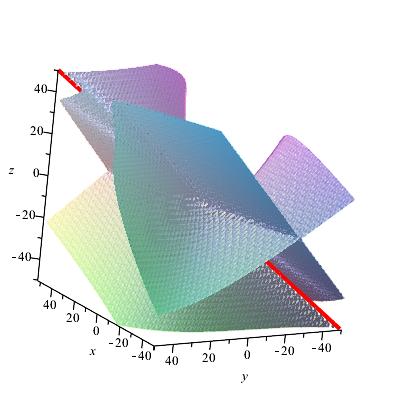}
 \caption{The surface in Example \ref{ejemplo-grado-5} and line $(t+2,t,t)$.}
 \label{fig: ex-2}
\end{figure}

Next, since the input parametrization $\cP$  is an standardized ruled parametrization without affine base points, by Theorem \ref{theorem-recta-critica}, the possible missing points of $\cP$ are included in the line defined by $\{x-y-2, x-z-2, y-z\}$, that is, the line $(t+2,t,t)$; see Fig. \ref{fig: ex-2}. In fact, on the line, $\cP$ reaches only the points
\[ \cP\left(\frac{1}{2}(1-i\sqrt{7}),\frac{1}{4}(3+i\sqrt{7})\right)=\left( {\frac {3}{32}}\,i\sqrt {7}+{\frac {65}{32}},{\frac {3}{32}}\,i\sqrt
{7}+\frac{1}{32},{\frac {3}{32}}\,i\sqrt {7}+\frac{1}{32}  \right),\]
 \[\cP\left(\frac{1}{2}(1+i\sqrt{7}),\frac{1}{4}(3-i\sqrt{7})\right)=\left(-{\frac {3}{32}}\,i\sqrt {7}+{\frac {65}{32}},-{\frac {3}{32}}\,i
\sqrt {7}+\frac{1}{32},-{\frac {3}{32}}\,i\sqrt {7}+\frac{1}{32} \right) \]
 Nevertheless,  ${\cal H}$ covers the whole line by taking ${\cal H}(0,t)$.
\end{example}

\begin{example}\label{ejemplo-grado-4}
We consider the parametrization
\[  \cP(s,t) =\left( \frac{r_1(s)+t p_1(s)}{q(s)}, \frac{r_2(s)+t p_2(s)}{q(s)}, \frac{r_3(s)+t p_3(s)}{q(s)} \right) = \]
\[
\left(  {\frac {ts+ \left( -3\,s+2 \right) {s}^{5}}{ \left( s-1 \right) {s}^{
2}}},{\frac {t \left( s+1 \right) + \left( -5\,s+3 \right) {s}^{2}}{
 \left( s-1 \right) {s}^{2}}},{\frac {t \left( s+2 \right) + \left( -8
\,s+5 \right) {s}^{2}}{ \left( s-1 \right) {s}^{2}}}\right).
\]
It parametrizes the degree 4 ruled surface defined by

\noindent $F(x,y,z)=x{y}^{3}-3\,{y}^{2}zx+3\,xy{z}^{2}-x{z}^{3}-2\,{y}^{4}+7\,z{y}^{3}-9\,
{y}^{2}{z}^{2}+5\,y{z}^{3}-{z}^{4}-9\,{y}^{2}x+18\,zyx-9\,{z}^{2}x-5\,
{y}^{3}-9\,{y}^{2}z+16\,{z}^{2}y-5\,{z}^{3}+27\,yx-27\,zx-78\,{y}^{2}+
89\,zy-23\,{z}^{2}-27\,x-14\,y+17\,z+12.$

We observe that $\cP(s,t)$ is in standardized form, so we go to Step 3 in Algorithm \ref{alg-1}. $I$ is the ideal generated by
 $\{p_1t+r_1,p_2t+r_2,p_3t+r_3,q\}$ in $k[s,t]$. We get $\sqrt{I}\neq I$, and a Gr\"obner basis of $\sqrt{I}$ w.r.t. the lexicographic ordering $t>s$ (Step 4) is
 $\{t^2-t, -t+s\}$. So, in Step 4 (b) we get that $f(s)=s$. Note that the affine base points are $(0,0)$ and $(1,1)$ and $t=s$ is the interpolating line; observe that the corresponding Gr\"obner basis of $I$, $\{t^2-t, st-t, s^2-t \}$, that does not read the interpolating polynomial of minimal degree, although it contains the parabola $t=s^2$ that passes through the base points.
  In Step 4 (b, i), we replace $\cP(s,t)$ by
 $\cP(s,1/t+s)$, namely
 \[ \cP(s,t)=\left(-{\frac {3\,{s}^{5}t-2\,{s}^{4}t-ts-1}{st \left( s-1 \right) }},-{
\frac {5\,{s}^{3}t-4\,t{s}^{2}-ts-s-1}{t \left( s-1 \right) {s}^{2}}},\right. \]\[\left.
-{\frac {8\,{s}^{3}t-6\,t{s}^{2}-2\,ts-s-2}{t \left( s-1 \right) {s}^{
2}}}\right). \]
 In Step 4 (b, ii), $\widetilde{Q}=s^2-s$. So, we replace $\cP$ by $\cP(s, 1/((s^2-s)t))$, namely
 \[
  \cP(s,t)=\left( {\frac {-3\,{s}^{4}-{s}^{3}-{s}^{2}+ts-s}{s}},{\frac {ts-5\,s+t-1}{s}
},{\frac {ts-8\,s+2\,t-2}{s}}\right).
\]
Now, the lexicographic order Gr\"obner basis of $\sqrt{I}$ is $\{s, t-1\}$, and hence $\cP$ still have one base point, namely $(0,1)$. Now, the interpolation polynomial is $f(s)=1$ and $\widetilde{Q}=s$. Repeating the steps as above we reach at the end of Step 4
\begin{equation}\label{eq-ejer-3}
 \cP(s,t)=(- (3\,{s}^{2}+s-t+1) s,ts+t-4,ts+2\,t-7)
 \end{equation}
In Step 5 we get
\[ \alpha_{12}=-3{s}^{5}-4{s}^{4}-2{s}^{3}+3{s}^{2},\alpha_{13}=-3{s}^{5}-7{s}^{4}-3{s}^{3}+5{s}^{2},\alpha_{23}=-3{s}^{2}+s
\]
In Step 6 the boolean conditions do  hold, and the output is the parametrization $\cP$ in (\ref{eq-ejer-3}) which is normal.
\end{example}

\section{Removal of Base Points: the General Case}

In Lemma \ref{lemma-reglada-sin-puntos-base} we have seen that, for the special case of standardized ruled surfaces, one can always find a reparametrization such that the new parametrization does not have affine base points. In this section we see that the ideas applied in the proof of that lemma can be generalized to any rational parametrization. More precisely we have the following result.

\begin{theorem}\label{th: remove bpt}
 Let $\cP\colon k^2\dashedrightarrow k^3$ be an affine rational parametrization,  with nonconstant components, of a surface. Then there exists a rational reparametrization $\cP\circ\psi$ without affine base points. Moreover,  $\deg(\cP)=\deg(\cP\circ\psi)$ as rational maps; in particular, properness is preserved.
\end{theorem}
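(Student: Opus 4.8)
The plan is to imitate the strategy of Lemma \ref{lemma-reglada-sin-puntos-base} in the two–variable setting: locate the finitely many affine base points, interpolate them by the graph of a polynomial, and then apply a birational change of parameters that sweeps that graph out to infinity, iterating until the affine base locus is empty. A pleasant feature is that the ``moreover'' part comes for free: every reparametrization $\psi$ I use will be birational, so multiplicativity of degrees of dominant rational maps gives $\deg(\cP\circ\psi)=\deg(\cP)\cdot\deg(\psi)=\deg(\cP)$, and properness, being the case $\deg(\cP)=1$, is preserved automatically.

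First I would fix notation: write $\cP=(N_1/D,N_2/D,N_3/D)$ with $\gcd(N_1,N_2,N_3,D)=1$, so the affine base points are exactly the common affine zeros of $N_1,N_2,N_3,D$, a finite set because $\cP$ parametrizes a surface. I then apply a generic invertible linear change $(s,t)\mapsto(as+bt,cs+dt)$, which is an automorphism of $k^2$ and so is harmless for degrees and properness. Its role is threefold, and here the hypothesis of \emph{nonconstant components} is essential: a generic such change makes each $N_i$ and $D$ genuinely depend on $t$ with nondegenerate leading forms (a component that was a function of $s$ alone now involves $t$), and it separates the $s$-coordinates of the base points into distinct values $a_1,\dots,a_m$. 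With the $a_i$ distinct I can Lagrange-interpolate a polynomial $f(s)$ with $f(a_i)=b_i$, where $(a_i,b_i)$ are the base points.

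Next I apply $\psi(s,t)=(s,f(s)+1/t)$, exactly the substitution used in the ruled case. On $\{t\neq0\}$ this is an isomorphism onto $k^2\smallsetminus\mathrm{graph}(f)$, and since every affine base point $(a_i,b_i)=(a_i,f(a_i))$ lies on $\mathrm{graph}(f)$, none of them is in the image; hence $\cP\circ\psi$ has no affine base point off $\{t=0\}$. Clearing denominators with $m=\max_i(\deg_t N_i,\deg_t D)$, one gets $\cP\circ\psi=(\widetilde N_i/\widetilde D)$ with $\widetilde N_i=t^{m}N_i(s,(tf+1)/t)$ and $\widetilde D=t^{m}D(s,(tf+1)/t)$; after cancelling the common factor $G=\gcd(\widetilde N_1,\widetilde N_2,\widetilde N_3,\widetilde D)$ the reduced parametrization can only have affine base points on the line $t=0$. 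As in the lemma, the vanishing of all the $N_i$ and $D$ along $\mathrm{graph}(f)$ at $s=a_i$ forces $G$ to be nontrivial, and this cancellation is the mechanism that should produce genuine progress.

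The crux, and the step where I expect the real work, is the termination argument. Following the lemma, the natural monovariant is a degree of the reduced common denominator (or, equivalently, the total length of the affine base scheme). The delicate point is that the inversion $t\mapsto1/t$ trades the affine base points for new base points on $t=0$, and these correspond to the base points of $\cP$ lying at infinity in the $t$-direction; the generic linear change of Step~1 is precisely what lets me guarantee that the point $[0:1:0]$ is not a base point and that the leading forms do not conspire, so that the new base locus on $t=0$ is strictly lighter than the affine locus just removed. Granting this strict decrease, finitely many rounds of interpolate–invert–cancel clear the affine base locus entirely; since every step is birational, the degree of the map and properness are preserved, which is the assertion of the theorem.
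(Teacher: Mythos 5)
Your overall strategy---normalize, interpolate the affine base points by a polynomial graph $t=f(s)$, and push that graph to infinity via $\psi(s,t)=(s,f(s)+1/t)$---is the same as the paper's. However, you have left unproven precisely the step you yourself call the crux. You set up an iteration and a termination argument based on a ``strict decrease'' of the base locus on $\{t=0\}$, and then write ``granting this strict decrease\ldots''; nothing in your text establishes that decrease, and the monovariant you propose is never shown to drop. This is a genuine gap. Moreover, the iteration is unnecessary: with the correct normalization a \emph{single} application of $\psi$ already removes \emph{all} affine base points, so the right claim is that the new base locus on $\{t=0\}$ is empty, not merely ``lighter''.

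The paper closes this by an explicit computation that you should supply. After arranging $\deg(p_1)=\deg(p_2)=\deg(p_3)=\deg(q)$ and that $(0:1:0)$ lies on none of the four projectivized curves (note that equalizing the degrees requires a fractional linear substitution of the type $\left(\frac{a_1t+b_1s+c_1}{d_1t+e_1s+h_1},\frac{a_2t+b_2s+c_2}{d_2t+e_2s+h_2}\right)$; your generic \emph{linear} change $(as+bt,cs+dt)$ preserves the total degrees and cannot achieve this), each numerator and the denominator takes the form $a_nt^n+a_{n-1}(s)t^{n-1}+\cdots+a_0(s)$ with $a_n$ a nonzero \emph{constant} and $\deg(a_i)\le n-i$. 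Substituting $t\mapsto f(s)+1/t$ and clearing $t^n$ yields $a_n(1+tf)^n+ta_{n-1}(1+tf)^{n-1}+\cdots+t^na_0$, whose value at $t=0$ is the nonzero constant $a_n$; hence $\tcP(s_0,0)=(a_n/b_n,\ldots)$ is defined and no base point lies on $t=0$, while a base point $(s_0,t_0)$ with $t_0\ne0$ would map under $\psi$ to a base point of $\cP$ and force $1/t_0=0$. Separately, your assertion that the common factor $G=\gcd(\widetilde N_1,\widetilde N_2,\widetilde N_3,\widetilde D)$ is nontrivial is incorrect in this setting: any irreducible common factor other than $t$ would produce a curve mapping under $\psi$ into the finite base locus of $\cP$, and $t$ does not divide the transformed denominator; the nontrivial $\widetilde{Q}$ in Lemma \ref{lemma-reglada-sin-puntos-base} comes from the special ruled structure and is used in a \emph{second} substitution, so the cancellation you invoke is not the mechanism here.
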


\begin{proof}
 If $\cP$ has no affine base points, take as $\psi$ the identity.

 We can assume without loss of generality that, after a suitable linear birational change,
 \[
  \cP(s,t) = \left( \frac{p_1(s,t)}{q(s,t)}, \frac{p_2(s,t)}{q(s,t)}, \frac{p_3(s,t)}{q(s,t)} \right)
 \]
 where $\deg(p_1)=\deg(p_2)=\deg(p_3)=\deg(q)$, $\gcd(p_1,p_2,p_3,q)=1$, and the projective point $(0:1:0)$ does not belong to any of the projectivizations of the four curves determined by numerators and denominator. We also assume that there are no two affine base points with the same $s$-coordinate, since this can be achieved by composition with $(s,t)\to(s+\lambda t,t)$ for generic $\lambda$ without losing the previous assumptions.

 By the last assumption, there exists an interpolation polynomial $f(s)$ for the affine base points, i.e. for every base point $(s_i,t_i)$ we have $t_i=f(s_i)$; note that the $\gcd$ condition implies finiteness of the base point set. We define the birational reparametrization
 \[
  \psi(s,t)=\left(s,\frac{1}{t}+f(s)\right)
 \]
 and $\tcP=\cP\circ\psi$. We will prove that $\tcP$ has no affine base points. To this end we write
 \[
  \cP=\left(\frac{a_nt^n+a_{n-1}(s)t^{n-1}+\cdots+a_0(s)}{b_nt^n+b_{n-1}(s)t^{n-1}+\cdots+b_0(s)},\ldots,\ldots\right)
 \]
 with $a_n,b_n\neq0$ and $\deg(a_i),\deg(b_i)\leq n-i$. This is possible by the hypothesis on the degrees of $p_1,p_2,p_3,q$, and the fact that $t^n$ appears in all of them with nonzero coefficient (equivalent to the hypothesis on $(0:1:0)$.) Then
 \[
  \tcP=\left(\frac{a_n(1+tf(s))^n+ta_{n-1}(s)(1+tf(s))^{n-1}+\cdots+t^na_0(s)}{b_n(1+tf(s))^n+tb_{n-1}(s)(1+tf(s))^{n-1}+\cdots+t^nb_0(s)},\ldots,\ldots\right).
 \]
 This new parametrization cannot have any base points of the form $(s_0,0)$, since $\tcP(s_0,0)=(a_n/b_n,\ldots,\ldots)$. On the other hand, if $(s_0,t_0)$ is a base point of $\tcP$ with $t_0\neq0$, then $\psi(s_0,t_0)$ is a base point $(s_i,t_i)$ of $\cP$. But this is impossible: if $\psi(s_0,t_0)=(s_i,t_i)$ then $s_0=s_i$ and $1/t_0+f(s_0)=t_i$ which imply $1/t_0=0$, contradiction.   Finally, note that the previous transformations are birational, and $\psi$ is a birational map from $k^2$ on $k^2$, and hence the degree of the parametrization maps is preserved.
\end{proof}

 The reasoning in the previous proof leads to an algorithmic process to remove the affine base points of a surface parametrization. To be more precise, let
 \[
  \cP(s,t) = \left( \frac{p_1(s,t)}{q(s,t)}, \frac{p_2(s,t)}{q(s,t)}, \frac{p_3(s,t)}{q(s,t)} \right)
 \]
 be the surface parametrization. First, we observe that some assumptions on the parametrization are done, namely
\begin{enumerate}
\item {\sf [degree and gcd condition]} $\deg(p_1)=\deg(p_2)=\deg(p_3)=\deg(q)$, and $\gcd(p_1,p_2,p_3,q)=1$,
\item {\sf [condition on $(0:1:0)$]} the projective point $(0:1:0)$ does not belong to any of the projectivizations of the four curves determined by numerators and denominator,
\item {\sf [general position of the base points]} there are no two affine base points with the same $s$-coordinate.
\end{enumerate}

Observe that, in the rational ruled case, condition 3 is satisfied while, in general, conditions 1 and 2 fail because of the particular structure of standardized form, that we wanted to be preserved. So, in Section \ref{sec:covering-theory}, we have developed an \textit{ad hoc} proof for the ruled case.

Once the parametrization satisfies these conditions, one computes the interpolation polynomial $f(s)$ passing through the affine base points.  Then, $\psi(s,t)=(s,\frac{1}{t}+f(s))$. We observe that condition 1 can always be achieved by a birational change of the form
\[ \left(\frac{a_1t+b_1s+c_1}{d_1t+e_1s+h_1}, \frac{a_2t+b_2s+c_2}{d_2t+e_2s+h_2}\right), \]
and condition 2 with a linear change $(s+\lambda t, t)$. In the following lemma we see how to check the third condition and how to actually compute the interpolation polynomial $f(s)$  without approximating roots. This result extends Lemma \ref{lemma-interpolation} to the general case.

\begin{lemma}\label{lemma-interpolacion-general}
Let $I$ be the ideal generated by $\{p_1,p_2,p_3,q\}$ in $k[s,t]$.
\begin{enumerate}
\item Condition 3 is satisfied if and only if there exists a polynomial of the form $t-g(s)$ in $\sqrt{I}$.
\item If $t-g(s)\in \sqrt{I}$, then $g(s)$ interpolates the affine base points.
\end{enumerate}
\end{lemma}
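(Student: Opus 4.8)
The plan is to reduce both statements to the strong Nullstellensatz, exactly as in the proof of Lemma \ref{lemma-interpolation}, but now allowing the generators $p_1,p_2,p_3,q$ to be arbitrary bivariate polynomials. First I would observe that the affine base points of $\cP$ are precisely the common zeros of the three numerators and the denominator, that is, the points of the variety $V(I)$; moreover, since $\gcd(p_1,p_2,p_3,q)=1$ (the degree-and-gcd condition), this common zero set is zero-dimensional and hence finite. The key tool is then the equivalence
\[
 t-g(s)\in\sqrt{I}\iff t-g(s)\text{ vanishes on }V(I)\iff t_i=g(s_i)\text{ for every base point }(s_i,t_i),
\]
which is just $\sqrt{I}=I(V(I))$ over the algebraically closed field $k$.

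For part (1), in the direction \emph{condition 3 $\Rightarrow$ existence} I would use the finiteness of the base point set together with the hypothesis that the base points have pairwise distinct $s$-coordinates: Lagrange interpolation then produces a polynomial $g(s)$ with $g(s_i)=t_i$ for all $i$, and by the displayed equivalence $t-g(s)\in\sqrt{I}$. For the converse, \emph{existence $\Rightarrow$ condition 3}, I would argue contrapositively: if $t-g(s)\in\sqrt{I}$, then on every base point the $t$-coordinate equals $g$ evaluated at the $s$-coordinate, so the $s$-coordinate determines the point; thus two base points sharing an $s$-coordinate must coincide, which is exactly condition 3.

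Part (2) is then immediate from the same equivalence: if $t-g(s)\in\sqrt{I}$, then $t-g(s)$ vanishes at each base point $(s_i,t_i)$, i.e.\ $g(s_i)=t_i$, which is precisely the assertion that $g$ interpolates the affine base points.

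I do not expect a genuine obstacle here; the content is the clean translation of \emph{passing through the base points} into membership in $\sqrt{I}$. The only points requiring care are the hypotheses feeding the Nullstellensatz: that $k$ is algebraically closed (so that $V(I)$ captures all base points and $\sqrt{I}=I(V(I))$ holds), and that the gcd condition forces $V(I)$ to be finite, so that Lagrange interpolation is available in the forward direction of part (1).
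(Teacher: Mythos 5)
Your proposal is correct and follows essentially the same route as the paper's (much terser) proof: identify the affine base points with $V(I)$, invoke the Nullstellensatz to translate ``passes through the base points'' into membership in $\sqrt{I}$, use interpolation under condition 3 for one direction, and note that a relation $t=g(s)$ on $V(I)$ forces distinct $s$-coordinates for the converse. Your version merely makes explicit the finiteness of $V(I)$ from the gcd condition and the role of algebraic closedness, which the paper leaves implicit.
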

\begin{proof}
If condition 3 holds, then $t-f(s)$ vanishes on all the points in the variety of $I$. So, $t-f(s)\in \sqrt{I}$. The converse is trivial, and (2) follows from (1).
\end{proof}

\begin{algorithm}\label{alg-2}
The input is a rational surface parametrization with affine base points, and the output is a parametrization of the same surface without base points.
\begin{enumerate}
 \item Reparametrize the input to satisfy conditions 1 and 2.
 \item Calculate $\sqrt{I}$; see Step 3 in Algorithm \ref{alg-1}.
 \item Calculate a Gr\"obner basis of $\sqrt{I}$ with respect to the lexicographical ordering $t>s$.
 \begin{enumerate}
  \item If the basis contains a polynomial of the form $t-f(s)$, then by the previous Lemma condition 3 is satisfied and we can apply the reparametrization of Theorem \ref{th: remove bpt} to RETURN $\cP(s,1/t+f(s))$.
  \item In the negative case, by elementary properties of Gr\"obner bases it follows that there is no polynomial of that form in $\sqrt{I}$. Again by Lemma \ref{lemma-interpolacion-general}, condition 3 is not satisfied. Apply a transformation $(s+\lambda t, t)$ for random $\lambda$ in the ground field and go to step 2.
 \end{enumerate}
\end{enumerate}
\end{algorithm}

As a consequence of Theorem \ref{th: remove bpt} and Algorithm \ref{alg-2}, the following corollaries hold.

\begin{corollary}\label{cor-1}
Every rational surface over an algebraically closed field of characteristic zero can always be parametrized without affine base points.
\end{corollary}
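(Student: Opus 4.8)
The plan is to obtain the statement as a direct packaging of Theorem \ref{th: remove bpt}, whose only hypothesis is that the parametrization have nonconstant components. So the first step is to produce, for an arbitrary rational surface $S$ over $k$, a rational parametrization $\cP\colon k^2\dashedrightarrow S\subseteq k^3$. The existence of some dominant such map is immediate from the definition of rational surface (its function field is purely transcendental over $k$ of transcendence degree $2$), and here the assumption that $k$ is algebraically closed guarantees that this parametrization may be taken with coefficients in $k$.

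Next I would isolate the degenerate situation in which a component of $\cP$ is constant. Since an image of dimension $2$ cannot lie on a line, at most one component can be constant; and if, say, the first component equals the constant $c_1$, then $\Image(\cP)$ lies in the plane $\{x=c_1\}$, whence $S$ --- the Zariski closure of an irreducible $2$-dimensional set inside this irreducible $2$-dimensional plane --- equals that plane, which is parametrized by $(c_1,s,t)$ with no affine base points. More uniformly, one may instead replace $\cP$ by $A\circ\cP$ for a generic affine automorphism $A$ of the ambient $k^3$: this leaves the affine base-point locus in the source unchanged, since $A$ is everywhere defined and invertible, yet makes all three components nonconstant, and a base-point-free parametrization of $S$ itself is then recovered by applying $A^{-1}$.

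With all components nonconstant, the third step is simply to apply Theorem \ref{th: remove bpt} to $\cP$, producing a reparametrization $\cP\circ\psi$ of the same surface $S$ that has no affine base points; as a bonus the theorem preserves the degree of the map, so properness is inherited when present. This finishes the argument. The two standing hypotheses enter exactly to keep this machinery running: algebraic closure forces the finitely many base points to have coordinates in $k$, so that the interpolation polynomial $f(s)$ of Theorem \ref{th: remove bpt} exists over $k$ and the general-position reduction on $s$-coordinates afforded by Lemma \ref{lemma-interpolacion-general} is available, while characteristic zero underlies the radical-ideal and degree computations on which that theorem and Algorithm \ref{alg-2} rest.

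The real content here lies entirely in Theorem \ref{th: remove bpt}, so I expect no serious obstacle: the only genuine work is \emph{(i)} securing the nonconstant-components hypothesis by dispatching the planar degenerate case, and \emph{(ii)} verifying that the field hypotheses are precisely those required by the supporting lemmas. Accordingly the proof should be brief.
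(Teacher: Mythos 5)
Your proposal is correct and follows the same route as the paper, which states this corollary as a direct consequence of Theorem \ref{th: remove bpt} (via Algorithm \ref{alg-2}) without further argument. The extra care you take with the degenerate constant-component case and with locating where algebraic closure and characteristic zero are actually used is a reasonable filling-in of details the paper leaves implicit, not a different method.
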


\begin{corollary}\label{cor-2}
Every rational surface parametrization can be reparametrized, without affine base points, without extending the field of coefficients and the degree as rational maps.
\end{corollary}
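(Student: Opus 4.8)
The plan is to derive this statement directly from Theorem~\ref{th: remove bpt} together with the effective description of the interpolation polynomial provided by Lemma~\ref{lemma-interpolacion-general}. Theorem~\ref{th: remove bpt} already supplies, for any affine surface parametrization with nonconstant components, a reparametrization $\cP\circ\psi$ free of affine base points and satisfying $\deg(\cP)=\deg(\cP\circ\psi)$. Hence the only two points that remain to be settled are: (i) that the whole construction can be performed over the ground field $k$ without passing to any extension, and (ii) the mild degenerate situations excluded by the hypotheses of that theorem, namely when some component of $\cP$ is constant.

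For (i), I would track the field of definition of every map used in the proof of Theorem~\ref{th: remove bpt} and in the reductions preceding Algorithm~\ref{alg-2}. The normalizing changes achieving the degree/gcd condition~1, the $(0:1:0)$-condition~2, and the general-position condition~3 are linear or bilinear maps depending on parameters (the $\lambda$ and the $a_i,b_i,c_i,\dots$) that are only required to be generic; since $k$ is infinite (we work in characteristic zero, cf. Corollary~\ref{cor-1}), such parameters can always be chosen inside $k$ itself, so each reduction is defined over $k$. The decisive point is the interpolation polynomial $f$: a priori it passes through base points whose coordinates lie in $\overline{k}$, so one might fear $f\in\overline{k}[s]$. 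However, Lemma~\ref{lemma-interpolacion-general} exhibits $f$ as the polynomial $g$ with $t-g(s)\in\sqrt{I}$, and $\sqrt{I}$ is the radical of an ideal of $k[s,t]$; a Gr\"obner basis of $\sqrt{I}$ over $k$ therefore produces $f\in k[s]$. Consequently $\psi(s,t)=(s,1/t+f(s))$ is defined over $k$, and so is its composite with the preliminary changes, which shows that $\cP\circ\psi$ has all its coefficients in $k$.

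For (ii), I would observe that a constant component never contributes to the base locus. Writing a constant component $c$ as $c\,q/q$ keeps the common denominator $q$ and leaves the degree condition untouched, and the set of affine base points is still the common zero set of the numerators and $q$, which is unaffected by such a component. Thus the argument of Theorem~\ref{th: remove bpt} applies verbatim: $\psi$ removes exactly these common zeros while the constant component is preserved unchanged. (If all three components were constant the map would not be dominant onto a surface, so this case does not arise.) Degree preservation is then inherited directly from Theorem~\ref{th: remove bpt}, all transformations involved being birational maps of $k^2$, which also guarantees that properness is kept.

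The main obstacle is precisely the descent of the field of definition in step (i): one must be sure that the interpolation polynomial does not force an extension of $k$. This is resolved by Lemma~\ref{lemma-interpolacion-general}, which replaces the geometric interpolation through the (possibly irrational) base points by a purely algebraic computation of $\sqrt{I}$ over $k$; combined with the observation that the finitely many generic parameters in the normalizing changes can be selected in the infinite field $k$, it yields a reparametrization defined over $k$ with no affine base points and the same degree as a rational map.
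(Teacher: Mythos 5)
Your proposal is correct and follows essentially the same route as the paper, which derives this corollary directly from Theorem~\ref{th: remove bpt} together with Algorithm~\ref{alg-2} (whose key ingredient is Lemma~\ref{lemma-interpolacion-general}, exhibiting the interpolation polynomial inside a Gr\"obner basis of $\sqrt{I}$ computed over $k$, so that no field extension is needed). Your additional remarks --- that the generic parameters can be chosen in the infinite field $k$ and that constant components do not affect the affine base locus --- are accurate fillings-in of details the paper leaves implicit.
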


We illustrate the ideas of this section by an example.

\begin{example}\label{ex-pto-base-general}
We consider the rational parametrization
\[ \cP(s,t)=\left(\frac{p_1(s,t)}{q(s,t)},\frac{p_2(s,t)}{q(s,t)}, \frac{p_3(s,t)}{q(s,t)} \right)=  \left({\frac {4\,{s}^{2}-4\,st+{t}^{2}-6\,s+3\,t}{2\,{s}^{2}+8\,st+3\,{t}^{
2}-8\,s-11\,t}},\right. \]
\[\left. {\frac {{s}^{2}-6\,st-{t}^{2}+s+7\,t}{2\,{s}^{2}+8\,st
+3\,{t}^{2}-8\,s-11\,t}},{\frac {-3\,{s}^{2}+22\,st+4\,{t}^{2}-5\,s-26
\,t}{2\,{s}^{2}+8\,st+3\,{t}^{2}-8\,s-11\,t}}\right) \]
Its base points are $\{(0,0),(2,1),(1,2),(1,-1) \}$. We observe that $\cP(s,t)$ satisfies conditions 1 and 2. Let $I$ be the ideal generated
by $\{p_1,p_2,p_3,q\}$. A Gr\"obner basis of $\sqrt{I}$ w.r.t. the lexicographic order with $t>s$ is
\[ \{s^3-3s^2+2s, -s^2+2st+s-2t, 2s^2+t^2-4s-t \}.\]
Since there is no polynomial of the form $t-f(s)$ in the basis,  condition 3 fails, and we perform a change of parameters. For example $\cP$ is replaced by $\cP(s+t,t)$. Applying again the Gr\"obner basis computation to $\sqrt{I}$ for the new $\cP$, we obtain the basis
\[ \{s^4-2s^3-s^2+2s, 2s^3-3s^2-s+2t \}.\]
The second polynomial implies that $t-(-s^3+(3/2)s^2+(1/2)s)\in \sqrt{J}$. So condition 3 is now satisfied and $f(s)=-s^3+(3/2)s^2+(1/2)s$. Therefore, performing the transformation $\cP(s,1/t+f(s))$ we get a new parametrization without affine base points, namely
\[ \left( {\frac {4 {s}^{6}{t}^{2}-12 {s}^{5}{t}^{2}-11 {s}^{4}{t}^{2}+42 {
s}^{3}{t}^{2}-8 {s}^{3}t+7 {s}^{2}{t}^{2}+12 {s}^{2}t-30 {t}^{2}s+
20 st-12 t+4}{52 {s}^{6}{t}^{2}-156 {s}^{5}{t}^{2}+17 {s}^{4}{t}^
{2}+226 {s}^{3}{t}^{2}-104 {s}^{3}t-69 {s}^{2}{t}^{2}+156 {s}^{2}t
-70 {t}^{2}s+100 st-76 t+52}},\right.\]
\[-2 {\frac {12 {s}^{6}{t}^{2}-36 {s
}^{5}{t}^{2}+7 {s}^{4}{t}^{2}+46 {s}^{3}{t}^{2}-24 {s}^{3}t-19 {s}
^{2}{t}^{2}+36 {s}^{2}t-10 {t}^{2}s+20 st-16 t+12}{52 {s}^{6}{t}^
{2}-156 {s}^{5}{t}^{2}+17 {s}^{4}{t}^{2}+226 {s}^{3}{t}^{2}-104 {s
}^{3}t-69 {s}^{2}{t}^{2}+156 {s}^{2}t-70 {t}^{2}s+100 st-76 t+52}
}, \]
\[ \left. {\frac {92 {s}^{6}{t}^{2}-276 {s}^{5}{t}^{2}+51 {s}^{4}{t}^{2}+
358 {s}^{3}{t}^{2}-184 {s}^{3}t-143 {s}^{2}{t}^{2}+276 {s}^{2}t-82
 {t}^{2}s+156 st-124 t+92}{52 {s}^{6}{t}^{2}-156 {s}^{5}{t}^{2}+
17 {s}^{4}{t}^{2}+226 {s}^{3}{t}^{2}-104 {s}^{3}t-69 {s}^{2}{t}^{2
}+156 {s}^{2}t-70 {t}^{2}s+100 st-76 t+52}} \right). \]
\end{example}

\begin{example}
In \cite{Wang}, section 4.5, the author tests his implicitization algorithm with a family of rational surface parametrizations collected from different papers. For those having affine points, we apply Algorithm \ref{alg-2}:
\begin{enumerate}
\item {\sf Example 1 in \cite{Wang}.} The parametrization is
\[ \cP=\left( {\frac {s{t}^{2}-{t}^{3}-t}{{t}^{2}-2\,t+1}},{\frac {{t}^{3}-st-{t}^{
2}+t+1}{{t}^{2}-2\,t+1}},{\frac {st-2\,t}{{t}^{2}-2\,t+1}}\right). \]
 The Gr\"obner basis  of $\sqrt{I}$ w.r.t. the lexicographical ordering $t>s$ is $\{s-2, t-1\}$; indeed $\cP$ has the affine base point $(2,1)$. So the interpolating polynomial is $f(s)=1$. Therefore,  $\cP(s,1/t+1)$ does not have affine base points.
\item {\sf Example 6 in \cite{Wang}.} The parametrization is
\[ \cP=\left( {\frac {s \left( s+t-1 \right) }{{s}^{2}+st+{t}^{2}-1}},{\frac {t
 \left( s+t-1 \right) }{{s}^{2}+st+{t}^{2}-1}},{\frac {s+t-1}{{s}^{2}+
st+{t}^{2}-1}}\right). \]
 The Gr\"obner basis  of $\sqrt{I}$ w.r.t. the lexicographical ordering $t>s$ is $\{s^2-s, s+t-1\}$; indeed $\cP$ has the affine base points $(0,1),(1,0)$. So the interpolating polynomial is $f(s)=1-s$. Therefore,  $\cP(s,1/t+(1-s))$ does not have affine base points.
 \item {\sf Example 9 in \cite{Wang}.} The parametrization is
\[ \begin{array}{lll}
\cP&= &\left( {\dfrac {{s}^{2}t+2\,{t}^{3}+{s}^{2}+4\,st+4\,{t}^{2}+3\,s+2\,t+2}{{s}
^{3}+{s}^{2}t+{t}^{3}+{s}^{2}+{t}^{2}-s-t-1}},\right. \\ \\
& &\left. {\dfrac {-{s}^{3}-2\,s{t}
^{2}-2\,{s}^{2}-st+s-2\,t+2}{{s}^{3}+{s}^{2}t+{t}^{3}+{s}^{2}+{t}^{2}-
s-t-1}},\right.\\ \\
&& \left.{\dfrac {-{s}^{3}-2\,{s}^{2}t-3\,s{t}^{2}-3\,{s}^{2}-3\,st+2\,{
t}^{2}-2\,s-2\,t}{{s}^{3}+{s}^{2}t+{t}^{3}+{s}^{2}+{t}^{2}-s-t-1}}\right). \end{array} \]
 The Gr\"obner basis  of $\sqrt{I}$ w.r.t. the lexicographical ordering $t>s$ is
 $$ \begin{array}{l}
 \{9\,{s}^{6}+8\,{s}^{5}-12\,{s}^{4}+27\,{s}^{3}+34\,{s}^{2}-44\,s-40,\\
1665\,{s}^{5}+382\,{s}^{4}-2152\,{s}^{3}+4939\,{s}^{2}+1540\,s+3288\,t
-4268\}.
\end{array}$$
 So $\cP$ has 6  affine base points, and the interpolation polynomial is
  \[ f(s)=-{\frac {555}{1096}}\,{s}^{5}-{\frac {191}{1644}}\,{s}^{4}+{\frac {269
}{411}}\,{s}^{3}-{\frac {4939}{3288}}\,{s}^{2}-{\frac {385}{822}}\,s+{
\frac {1067}{822}}.
  \]  Therefore,  $\cP(s,1/t+f(s))$ does not have affine base points.
  \item {\sf Example 10 in \cite{Wang}.} The parametrization is
\[ \begin{array}{lll}
\cP&=&\left( {\dfrac {-{s}^{4}+4\,{s}^{3}t-2\,{s}^{2}{t}^{2}+s{t}^{3}+{s}^{2}t-2\,{
t}^{3}}{-{s}^{3}t+6\,{s}^{2}{t}^{2}-3\,s{t}^{3}+{t}^{4}+{s}^{3}-2\,s{t
}^{2}}},\right.\\ \\ && {\dfrac {-{s}^{3}t-2\,{s}^{3}+{s}^{2}t+3\,s{t}^{2}-{t}^{3}}{-{s
}^{3}t+6\,{s}^{2}{t}^{2}-3\,s{t}^{3}+{t}^{4}+{s}^{3}-2\,s{t}^{2}}}, \\ \\
&&
\left.{
\dfrac {-s{t}^{3}+{s}^{3}-4\,{s}^{2}t-s{t}^{2}+6\,{t}^{3}}{-{s}^{3}t+6
\,{s}^{2}{t}^{2}-3\,s{t}^{3}+{t}^{4}+{s}^{3}-2\,s{t}^{2}}}\right).\end{array} \]
 The Gr\"obner basis  of $\sqrt{I}$ w.r.t. the lexicographical ordering $t>s$ is
 $$ \begin{array}{l}
 \{{s}^{6}-7\,{s}^{5}-20\,{s}^{4}+173\,{s}^{3}-27\,{s}^{2}+s,\\-176\,{s}^{
5}+1205\,{s}^{4}+3605\,{s}^{3}-29867\,{s}^{2}+2371\,s+703\,t\}.
\end{array}$$
 So $\cP$ has 6  affine base points, and the interpolation polynomial is
  \[ f(s)={\frac {176}{703}}\,{s}^{5}-{\frac {1205}{703}}\,{s}^{4}-{\frac {3605}
{703}}\,{s}^{3}+{\frac {29867}{703}}\,{s}^{2}-{\frac {2371}{703}}\,s.
  \]  Therefore,  $\cP(s,1/t+f(s))$ does not have affine base points.
\end{enumerate}
\end{example}

\section{Acknowledgements}

This work was developed, and partially supported, by the Spanish \emph{Ministerio de Eco\-no\-m{\'\i}a y Competitividad} under Project MTM2011-25816-C02-01; as well as Junta de Extremadura and FEDER funds (group FQM024). The first and third authors are members of the
Research Group ASYNACS (Ref. CCEE2011/R34). The second author is a member of the research group GADAC (U. of Extremadura).


\begin{thebibliography}{11}

\bibitem[AL94]{Adams}
William~W. Adams and Philippe Loustaunau.
\newblock {\em An introduction to {G}r\"obner bases}, volume~3 of {\em Graduate
  Studies in Mathematics}.
\newblock American Mathematical Society, Providence, RI, 1994.

\bibitem[AR07]{AndradasRecio}
Carlos Andradas and Tom{\'a}s Recio.
\newblock Plotting missing points and branches of real parametric curves.
\newblock {\em Appl. Algebra Engrg. Comm. Comput.}, 18(1-2):107--126, 2007.

\bibitem[BYLLM11]{Bai}
Yan-Bing Bai, Jun-Hai Yong, Chang-Yuan Liu, Xiao-Ming Liu, and Yu Meng.
\newblock Polyline approach for approximating Hausdorff distance between planar free-form curves.
\newblock {\em Comput.-Aided Des.}, 43(6):687--698, 2011.

\bibitem[BR95]{Bajaj}
Chandrajit~L. Bajaj and Andrew~V. Royappa.
\newblock Finite representations of real parametric curves and surfaces.
\newblock {\em Internat. J. Comput. Geom. Appl.}, 5(3):313--326, 1995.

\bibitem[GC91]{Gao}
Xiao~Shan Gao and Shang-Ching Chou.
\newblock On the normal parameterization of curves and surfaces.
\newblock {\em Internat. J. Comput. Geom. Appl.}, 1(2):125--136, 1991.

\bibitem[CMXP10]{Chen}
Xiao-Diao Chen, Weiyin Ma, Gang Xu, and Jean-Claude Paul.
\newblock Computing the Hausdorff distance between two B-spline curves.
\newblock {\em Comput.-Aided Des.}, 42:1197--1206, 2010.

\bibitem[CLO07]{CoxLittleOshea2007a}
David Cox, John Little, and Donal O'Shea.
\newblock {\em Ideals, varieties, and algorithms}.
\newblock Undergraduate Texts in Mathematics. Springer, New York, third
  edition, 2007.
\newblock An introduction to computational algebraic geometry and commutative
  algebra.

\bibitem[KOYKE10]{Kim}
Yong-Joon Kim, Young-Taek Oh, Seung-Hyun Yoon, Myung-Soo Kim, and Gershon Elber.
\newblock Precise Hausdorff distance computation for planar freeform curves using biarcs and depth buffer.
\newblock Vis. Comput. 26(6--8):1007--1016, 2010.

\bibitem[SPD14]{Sonia}
Li-Yong Shen and Sonia P{\'e}rez-D{\'{\i}}az.
\newblock Characterization of rational ruled surfaces.
\newblock {\em J. Symbolic Comput.}, 63:21--45, 2014.


\bibitem[Sei74]{Seidenberg}
A.~Seidenberg.
\newblock Constructions in algebra.
\newblock {\em Trans. Amer. Math. Soc.}, 197:273--313, 1974.

\bibitem[Sen02]{Sendra2002a}
J.~Rafael Sendra.
\newblock Normal parametrizations of algebraic plane curves.
\newblock {\em J. Symbolic Comput.}, 33(6):863--885, 2002.

\bibitem[SSV14a]{issac}
J.~Rafael Sendra, David Sevilla, and Carlos Villarino.
\newblock Covering of surfaces parametrized without projective base points.
\newblock Proc. ISSAC 2014 (to appear), \url{http://dx.doi.org/10.1145/2608628.2608635}.

\bibitem[SSV14b]{Ricam}
J.~Rafael Sendra, David Sevilla, and Carlos Villarino.
\newblock Some results on the surjectivity of surface parametrizations.
\newblock Accepted in {\em Computer Algebra and Polynomials}, proceedings of the {\em Lecture Notes in Comput. Sci.} series. Springer, Berlin, 2014.

\bibitem[Wan04]{Wang}
Dongming Wang.
\newblock A simple method for implicitizing rational curves and surfaces.
\newblock {\em J. Symbolic Comput.}, 38(1):899--914, 2004.

\end{thebibliography}
\end{document}